\documentclass[11pt, amsfonts]{amsart}

\usepackage{amsmath,amssymb,amsthm,color,enumerate,comment,bm}
\usepackage[all]{xy}
\usepackage{tikz}
\usepackage{todonotes}
\usepackage{url} 
\usepackage{hyperref} 

\numberwithin{equation}{section}

\SelectTips{eu}{12}

\newtheorem{thm}{Theorem}[section]

\newtheorem{theorem}[thm]{Theorem}

\newtheorem{lemma}[thm]{Lemma}

\theoremstyle{definition}

\newtheorem{definition}[thm]{Definition}

\newtheorem{remark}[thm]{Remark}

\theoremstyle{remark}

\newcommand{\ZZ}{\mathbb{Z}}

\newcommand{\RR}{\mathbb{R}}

\newcommand{\hatR}{\widehat{R}}

\newcommand{\Cay}{\mathrm{Cay}}
\newcommand{\Bor}{\mathrm{Bor}}
\newcommand{\length}{\mathrm{length}}
\newcommand{\Ker}{\mathrm{Ker}}
\newcommand{\ind}{\mathrm{ind}}

\newcommand{\NNN}{\mathcal{N}}
\newcommand{\BBB}{\mathcal{B}}

\newcommand{\sgn}{\mathrm{sgn}}
\newcommand{\hPsi}{\hat{\Psi}}
\newcommand{\hx}{\hat{x}}

\date{}

\title[Cayley graphs and circular chromatic numbers]{Cayley graphs of abelian groups, circular colorings and discrete fundamental groups}

\author[K. Enami]{Kengo Enami}
\address{College of Liberal Arts and Sciences,  Kitasato University, 1-15-1 Kitasato, Minami-ku, Sagamihara, Kanagawa 252-0373, Japan}
\email{enamikengo@gmail.com}

\author[T. Matsushita]{Takahiro Matsushita}
\address{Department of Mathematical Sciences, Faculty of Science, Shinshu University, Matsumoto, Nagano 390-8621, Japan}
\email{matsushita@shinshu-u.ac.jp}

\subjclass[2020]{Primary 05C15
}

\keywords{circular chromatic number; Cayley graphs; neighborhood complex; $k$-fundamental group}

\begin{document}

\baselineskip.525cm

\begin{abstract}
For each integer $k \ge 2$, we prove that an abelian group $\Gamma$ satisfies $\exp(\Gamma) \in \{1,\dots,k\} \cup \{2,4,\dots,2k\}$ if and only if it admits no non-bipartite Cayley graph whose circular chromatic number is strictly less than $2 + \frac{2}{k-1}$. This refines the algebraic characterization by Krebs and Sankar of abelian groups that admit $3$-chromatic Cayley graphs. 
\end{abstract}

\maketitle

\section{Introduction}\label{section introduction}

The chromatic numbers of Cayley graphs have attracted considerable attention \cite{Alon,Babai,BNT,Czerwinski,CK,DHY,GK,GKM,GR2,Payan}, in part because of their connections with information theory \cite{AO}. Krebs and Sankar \cite{KS} recently provided an algebraic characterization of the abelian groups that admit a $3$-chromatic Cayley graph. The principal purpose of this paper is to refine their characterization in terms of circular chromatic numbers.

Throughout this paper, graphs may be infinite and may have self-loops, but they have no multiple edges. For a symmetric subset $S$ of a group $\Gamma$, the \emph{Cayley graph $\Cay(\Gamma, S)$} is the graph defined as follows: The vertex set of $\Cay(\Gamma, S)$ is $\Gamma$, and two elements $x$ and $y$ in $\Gamma$ are adjacent in $\Cay(\Gamma, S)$ if and only if $x^{-1}y \in S$. Here, we note that we do not require $S$ to generate $\Gamma$. The following theorem by Payan is an early result illustrating the restrictions that the group structure can impose on the $3$-colorability of a Cayley graph.

\begin{theorem}[{Payan \cite{Payan}; see also \cite{BNT,CK}}]\label{theorem Payan}
Let $m$ be a non-negative integer. If $\Gamma = (\ZZ / 2 \ZZ)^m$ and $S$ is a symmetric subset of $\Gamma$, then $\Cay(\Gamma, S)$ is not $3$-chromatic .
\end{theorem}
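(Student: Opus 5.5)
Suppose, for contradiction, that $G=\Cay(\Gamma,S)$ is $3$-chromatic with $\Gamma=(\ZZ/2\ZZ)^m$. We may assume $0\notin S$, since a self-loop makes $G$ uncolorable. We may also replace $\Gamma$ by the subgroup generated by $S$, because $G$ is a disjoint union of translates of that Cayley graph. So $\Gamma$ is a finite $\mathbb{F}_2$-vector space spanned by $S$ and every element is its own inverse. Fix a proper $3$-coloring $c\colon \Gamma\to \ZZ/3\ZZ$. The plan is to work with the ``translated'' colorings $x\mapsto c(x+s)$ and a $\ZZ/3$-valued cocycle.

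For each $s\in S$ and $x\in\Gamma$, the vertices $x$ and $x+s$ are adjacent, so $c(x+s)-c(x)\in\{1,2\}=\{\pm1\}$. Define $\varepsilon_s(x)=c(x+s)-c(x)\in\{\pm1\}\subset\ZZ/3$. Because $s=-s$, applying the definition at $x+s$ gives $\varepsilon_s(x+s)=-\varepsilon_s(x)$. Now sum $\varepsilon_s$ over all of $\Gamma$ in $\ZZ$ (not mod 3), viewing the values as $\pm1$ integers. Pair $x$ with $x+s$; these pairs partition $\Gamma$ because $s\neq0$ has order $2$. Each pair contributes $0$, so the function $\varepsilon_s$ takes the value $+1$ exactly $|\Gamma|/2$ times.

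The key step is to derive a parity contradiction from this balance by using a closed walk of odd length. Since $G$ is $3$-chromatic it is not bipartite, so there is an odd cycle $s_1+\dots+s_\ell=0$ with $s_i\in S$ and $\ell$ odd. For each $x$, telescoping gives $\sum_{i}\varepsilon_{s_i}(x_{i-1})=0$ in $\ZZ/3$, where $x_i=x+s_1+\dots+s_i$. Each term is $\pm1$. Hence the integer sum $N(x)=\sum_i \varepsilon_{s_i}(x_{i-1})$ is odd, because $\ell$ is odd, and it is divisible by $3$; so $N(x)\equiv 3\pmod 6$ and in particular $N(x)\neq 0$. Summing $N(x)$ over all $x\in\Gamma$, the map $x\mapsto x_{i-1}$ is a bijection of $\Gamma$, so $\sum_x N(x)=\sum_i\sum_y\varepsilon_{s_i}(y)=0$ by the balance just proved. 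Thus $\sum_x N(x)=0$ with every $N(x)\in 3+6\ZZ$. We still need a contradiction from this, which requires a finer invariant. For that, use the sign of $N(x)$, a winding number: $N(x)/3$ is an odd integer.

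The main obstacle is making this step decisive, since the total sum being zero does not alone contradict $N(x)$ being odd multiples of $3$. The approach I would try is to compare $N(x)$ with $N(x+s_1)$. Shifting the starting point along the cycle does not change the winding number, so $N$ is constant on translates by the subgroup generated by the cycle's elements. On the other hand, translating by $s_1$ and traversing the cycle in a related way should reverse signs via $\varepsilon_s(x+s)=-\varepsilon_s(x)$. This would force $N(x+s_1)=-N(x)$ while also $N(x+s_1)=N(x)$, so $N(x)=0$, which contradicts $N(x)\equiv3\pmod6$. Making precise that the shifted walk is the cyclic rotation of the reversed walk, which is where commutativity and exponent $2$ enter, is the delicate point. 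If it fails, I would fall back to the known neighborhood-complex argument of \cite{BNT,CK}: a $\ZZ/2$-equivariant map yields a contradiction via the Borsuk--Ulam theorem.
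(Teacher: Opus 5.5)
\textbf{There is a genuine gap: the decisive step is not proved.} Your preliminary steps are correct: $\varepsilon_s(x+s)=-\varepsilon_s(x)$, and for an odd closed walk the integer $N(x)$ is an odd multiple of $3$. But both justifications you sketch for the final step fail as stated.

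First, rotation invariance does not give $N(x+s_1)=N(x)$. By definition, $N(x+s_1)$ is computed along $x+s_1,\,x,\,x+s_2,\,x+s_2+s_3,\dots$, that is, with the steps in the fixed order $s_1,s_2,\dots,s_\ell$. This is not a rotation of the walk from $x$. Rotation only tells you that the walk from $x+s_1$ with steps $s_2,\dots,s_\ell,s_1$ has winding number $N(x)$.

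Second, rotating the reversed walk to start at $x+s_1$ gives steps $s_1,s_\ell,s_{\ell-1},\dots,s_2$. This coincides with the walk defining $N(x+s_1)$ only if $(s_2,\dots,s_\ell)$ is a palindrome. That never happens here: a palindrome of even length sums to $0$ in $\Gamma$, which would force $s_1=0$.

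Any way of making your comparison work must relate walks having the same steps in different orders, and you have no tool for that. The balance identity $\sum_x N(x)=0$ is correct but, as you note, cannot by itself give a contradiction. The fallback is a citation, not an argument.

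\textbf{The missing lemma} is that the winding number of $c$ along a closed walk depends only on the multiset of steps. Interchanging consecutive steps $s_i,s_{i+1}$ replaces the single vertex $x_{i-1}+s_i$ by $x_{i-1}+s_{i+1}$; this is where commutativity enters. A one-vertex change cannot alter the winding number in $K_3$:
\begin{itemize}
\item if $c(x_{i-1})=c(x_{i+1})$, the two affected steps contribute $0$ both before and after;
\item otherwise the middle color is forced to be the third color.
\end{itemize}

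With this lemma the proof closes directly. You need neither the comparison of $N(x)$ with $N(x+s_1)$ nor the sum over $\Gamma$. Since $s_i=-s_i$, the reversed walk starts at $x$ and has steps $s_\ell,\dots,s_1$, a permutation of the original steps. Hence $-N(x)=N(x)$, so $N(x)=0$, contradicting that $N(x)$ is odd.

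\textbf{Comparison with the paper.} The paper does not reprove Payan's theorem; it is the case $k=2$ of Theorem~\ref{theorem other direction}. There, the transposition move \eqref{equation transposition}, a move of type (B)$_2$, makes $\pi_1^2(\Cay(\Gamma,S),1_\Gamma)$ abelian. So $[\gamma]_2^2$ is the product of the trivial classes $[s_i,s_i]_2$, and Theorem~\ref{theorem circular} yields $\chi_c\ge 4$. Your $N(x)/3$ is the image of $[\gamma]_2$ under the homomorphism $\pi_1^2(G,x)\to\pi_1^2(K_3,c(x))\cong\ZZ$ induced by $c$. Once repaired, your argument is an elementary, self-contained specialization of that approach. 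It gives $\chi\ge 4$ rather than the stronger $\chi_c\ge 4$.
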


Payan's theorem suggests the problem of characterizing algebraically the groups that admit no $3$-chromatic Cayley graph. For abelian groups, Krebs and Sankar \cite{KS} obtained the following algebraic characterization in terms of the exponent of groups. Recall that the \emph{exponent of a group $\Gamma$} is the least positive integer $n$ such that $g^n = 1$ for every $g \in \Gamma$. However, if no such $n$ exists, we set $\exp(\Gamma) = \infty$.

\begin{theorem}[{Krebs--Sankar \cite[Theorems~1.2 and 5.1]{KS}}]\label{theorem KS}
For an abelian group $\Gamma$, the following are equivalent:
\begin{enumerate}[(1)]
\item $\exp(\Gamma) \in \{1,2,4\}$.
\item There is no symmetric subset $S$ of $\Gamma$ such that $\Cay(\Gamma, S)$ is $3$-chromatic.
\end{enumerate}
Moreover, if $\Gamma$ is finitely generated, then the following is also equivalent to the above:
\begin{enumerate}[(3)]
\item There is no symmetric generating set $S$ of $\Gamma$ such that $\Cay(\Gamma, S)$ is $3$-chromatic.
\end{enumerate}
\end{theorem}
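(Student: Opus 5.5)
We prove the two directions of the equivalence of (1) and (2) separately, and then check (3) in the finitely generated case.

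\emph{(2)$\Rightarrow$(1), by contrapositive.} Suppose $\exp(\Gamma)\notin\{1,2,4\}$. Then $\Gamma$ has an element $g$ whose order $n$ is either odd with $n\ge 3$, a multiple of $8$, or infinite. Indeed, if every element has order a power of $2$ and the orders are bounded by $4$, then the exponent lies in $\{1,2,4\}$. Otherwise some order has an odd prime factor, and a suitable power of $g$ has odd order $\ge 3$; or some $2$-power order is at least $8$.

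In each case we choose a symmetric $S\subseteq\langle g\rangle$ for which $\Cay(\langle g\rangle,S)$ is $3$-chromatic.
\begin{itemize}
\item For odd $n$ take $S=\{g,g^{-1}\}$. This gives the odd cycle $C_n$, which is $3$-chromatic.
\item For $n=\infty$ take $S=\{g^{\pm1},g^{\pm2}\}$. This contains triangles and is $3$-colorable by $g^i\mapsto i \bmod 3$.
\item For $8\mid n$ take $S=\{g^{\pm1},g^{\pm(n/2+1)}\}$ or a similar circulant. Here we pick odd step sizes so the graph contains an odd cycle, and we exhibit a $3$-coloring by hand, e.g.\ using a coloring periodic mod $8$ or mod $n$.
\end{itemize}

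Next we pass from $\langle g\rangle$ to $\Gamma$. Since $S\subseteq\langle g\rangle$, the graph $\Cay(\Gamma,S)$ is a disjoint union of copies of $\Cay(\langle g\rangle,S)$, one for each coset. It is therefore also $3$-chromatic, which violates (2). The same construction also violates (3) once we add generators. For finitely generated $\Gamma$, write $\Gamma=\langle g\rangle\times\Gamma'$ up to adjusting $g$, and extend $S$ by generators of $\Gamma'$ acting as "product edges". A coloring $c(x,y)=c(x)+\phi(y)\bmod 3$, with a suitable homomorphism-like shift, preserves properness. This adjustment needs care but is standard.

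\emph{(1)$\Rightarrow$(2).} This is the hard direction. Assume $\exp(\Gamma)\in\{1,2,4\}$ and suppose $\Cay(\Gamma,S)$ has a proper $3$-coloring $c$ and contains an odd cycle. By compactness (de Bruijn--Erd\H{o}s) we may restrict to a finitely generated, hence finite, subgroup containing the odd cycle. A $3$-coloring $c\colon\Gamma\to\ZZ/3$ assigns to each edge a winding value in $\{\pm1\}$. We define a winding number on closed walks. For the closed walk given by repeating a generator $s$ $\mathrm{ord}(s)$ times, and for commutator squares $s,t,s^{-1},t^{-1}$ (translated), the winding number is the sum of the $\pm1$ contributions. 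This sum is $\equiv \text{length} \pmod 2$ and $\equiv 0 \pmod 3$.

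The key step, which we expect to be the main obstacle, is to show that every closed walk is, at the level of winding numbers, generated by these relator walks and their translates. This is a topological or "$\times$-homotopy" statement: the winding number is an invariant of $\times$-homotopy classes of closed walks in the neighborhood complex. An odd cycle then must have winding number expressible as a sum of winding numbers of relators of length $2$, $4$, or $1$ (self-loops are excluded by properness). Each such relator has even length, except that an involution walk $s,s$ has length $2$.

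The crux is a parity/mod-$3$ count. Relator walks of length $4$ (order-$4$ generators and commutator squares) have winding in $\{0,\pm4\}\cap 3\ZZ=\{0\}$, and length-$2$ walks have winding $0$. Hence every closed walk has winding $0$. But an odd cycle has odd winding number, a contradiction.

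Making "every closed walk reduces to relators" rigorous is where we would invest the effort. We would define the discrete fundamental group of the Cayley graph and show that it is generated by translates of these relator loops, using the standard presentation of an abelian group with exponent dividing $4$.
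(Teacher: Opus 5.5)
The gap is in (1)$\Rightarrow$(2), at the step you flag, and the route you propose for it cannot work. Every loop in your generating family has even length: backtracks $(s,s^{-1})$, power loops $(s,\dots,s)$ with $\mathrm{ord}(s)\in\{2,4\}$, and translated commutator squares $(s,t,s^{-1},t^{-1})$. Both elementary moves of the discrete fundamental group preserve the parity of the length: adding or deleting a backtrack changes it by $2$, and changing one vertex leaves it unchanged. So an odd closed walk is never homotopic to a product of translates of these loops. Your claim that the discrete fundamental group is ``generated by these relator loops'' therefore fails exactly for the odd cycle you must handle.

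The underlying error is that $S$ is an arbitrary symmetric subset, not a basis. The relations among elements of $S$ (the group $R[S]$ of Section~\ref{section fundamental groups}) include, for instance, $e_s+e_t-e_u$ whenever $u=st\in S$; an example is the triangle $(1,1,2)$ in $\Cay(\ZZ/4\ZZ,\{\pm1,2\})$. Such relations do not follow from powers and commutators, so the standard presentation of an exponent-$4$ abelian group does not describe the closed walks of $\Cay(\Gamma,S)$.

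The missing idea is to pass to a multiple of $\gamma$. Since $\Gamma$ is abelian, swapping two consecutive steps changes a single vertex, so it preserves your winding number $w$. Rearranging $\gamma^4=(s_1,\dots,s_m)^4$ step by step then gives $4w(\gamma)=\sum_i w(s_i,s_i,s_i,s_i)$. Each summand is the winding of a closed walk of length $4$ (as $s_i^4=1$), so it lies in $\{0,\pm2,\pm4\}\cap 3\ZZ=\{0\}$. Hence $w(\gamma)=0$, contradicting $w(\gamma)\equiv\length(\gamma)\equiv1\pmod 2$.

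This is the paper's argument in elementary form. Via Theorem~\ref{theorem presentation}, the paper shows $[\gamma]_k^{2n}=[\gamma_1]_k\cdots[\gamma_m]_k$, where each $\gamma_i=(s_i,\dots,s_i)$ has $2n$ steps with $n\le k$ and is null-homotopic. So $[\gamma]_k$ is a torsion class of odd length, and Theorem~\ref{theorem circular} with $k=2$ finishes the proof.

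There is also a concrete error in (2)$\Rightarrow$(1). For $8\mid n$, your set $\{g^{\pm1},g^{\pm(n/2+1)}\}$ consists of odd steps in a cyclic group of even order. Then $g^i\mapsto i \bmod 2$ is a proper $2$-coloring, so the graph is bipartite. In $\ZZ/n\ZZ$ with $n$ even, an odd cycle needs an \emph{even} step, so your principle is backwards. Take instead $\{g^{\pm1},g^{\pm n/2}\}$, which gives the M\"obius ladder $M_n$. It contains an odd cycle of length $n/2+1$, and $\chi_c(M_n)=n/(n/2-1)\le 8/3$, as in Lemma~\ref{lemma even}.

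For (3), choose $g$ to generate a cyclic direct summand: a $\ZZ$-summand, or a maximal-order cyclic summand of a primary component. Then adjoin $(gh_i)^{\pm1}$ for generators $h_i$ of a complement. Projection onto $\langle g\rangle$ is then a graph homomorphism, as in Theorem~\ref{theorem fg}, which is the care your sketch leaves unspecified.
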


We refine Theorem~\ref{theorem KS} using the circular chromatic number. The circular chromatic number $\chi_c(G)$, introduced by Vince \cite{Vince}, is a refinement of the ordinary chromatic number through the relation $\lceil \chi_c(G) \rceil = \chi(G)$ (see Section~\ref{section preliminaries} for the definition). For comprehensive introductions to circular coloring, see \cite{Nadolski,ZhuSurvey,ZhuRecent}. We are now ready to state our main result:

\begin{theorem}\label{theorem main}
Let $k \ge 2$ be an integer. For an abelian group $\Gamma$, the following are equivalent:
\begin{enumerate}[(1)]
\item $\exp(\Gamma) \in \{ 1, \cdots, k\} \cup \{ 2,4, \cdots, 2k\}$.
\item There is no symmetric subset $S$ of $\Gamma$ such that
\[ 2 < \chi_c(\Cay(\Gamma, S)) < 2 + \frac{2}{k-1}.\]
\item There is no symmetric subset $S$ of $\Gamma$ such that
\[ 2 < \chi_c(\Cay(\Gamma, S)) \le 2 + \frac{2}{k}.\]
\end{enumerate}
Moreover, if $\Gamma$ is finitely generated, then the following is also equivalent to the above:
\begin{enumerate}[(4)]
\item There is no symmetric generating set $S$ of $\Gamma$ such that
\[ 2 < \chi_c(\Cay(\Gamma, S)) \le 2 + \frac{2}{k}.\]
\end{enumerate}
\end{theorem}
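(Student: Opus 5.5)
The plan is to prove $(1)\Rightarrow(2)\Rightarrow(3)\Rightarrow(1)$, and, for finitely generated $\Gamma$, $(3)\Rightarrow(4)\Rightarrow(1)$. Since $2+\frac{2}{k}<2+\frac{2}{k-1}$, the implications $(2)\Rightarrow(3)$ and $(3)\Rightarrow(4)$ are immediate. The remaining implications are $(3)\Rightarrow(1)$ and $(4)\Rightarrow(1)$, which need constructions, and $(1)\Rightarrow(2)$, which needs an obstruction.

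\textbf{Constructions.} I prove the contrapositive of $(3)\Rightarrow(1)$. Suppose $\exp(\Gamma)\notin\{1,\dots,k\}\cup\{2,4,\dots,2k\}$.
\begin{enumerate}[(a)]
\item If $\exp(\Gamma)$ is finite, an abelian group has an element of order $n=\exp(\Gamma)$. Then either $n$ is odd with $n\ge k+1$, or $n=2d$ with $d\ge k+1$.
\item If $\exp(\Gamma)=\infty$, then $\Gamma$ has an element of infinite order, or elements of arbitrarily large finite order. In the latter case an abelian group contains elements of large odd prime order, or elements of order $2^a$ with $a$ large, so we fall back to (a).
\end{enumerate}
The targets are circular cliques realized as Cayley graphs. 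For odd $n$, $\Cay(\ZZ/n,\{\pm 1\})=C_n$ has $\chi_c=2+\frac{2}{n-1}\le 2+\frac{2}{k}$. For $n=2d$, $K_{2d/(d-1)}=\Cay(\ZZ/2d,\{d-1,d,d+1\})$ has $\chi_c=2+\frac{2}{d-1}\le 2+\frac{2}{k}$. In both cases $\chi_c>2$ because the graph is non-bipartite.

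If $S$ lies in a cyclic subgroup $H\le\Gamma$, then $\Cay(\Gamma,S)$ is a disjoint union of copies of $\Cay(H,S)$, so it has the same $\chi_c$. For an element of infinite order, use the surjection $\ZZ\to\ZZ/n$ and the pullback set. The pullback Cayley graph is homomorphically equivalent to the target (there is a quotient map one way, and a section on vertices the other way), so $\chi_c$ is preserved.

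For $(4)\Rightarrow(1)$ with $\Gamma$ finitely generated, the invariant factor decomposition gives a direct summand $\ZZ/n$ with $n=\exp(\Gamma)$, or a summand $\ZZ$. Let $\pi$ be the projection onto it, and take $S'=\pi^{-1}(S)$. Then $S'$ generates $\Gamma$, because it contains $S+\Ker\pi$ and $S$ generates the summand. Also $\Cay(\Gamma,S')$ is homomorphically equivalent to the target graph, so the same $\chi_c$ bound holds.

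\textbf{Obstruction, $(1)\Rightarrow(2)$.} Suppose $\Cay(\Gamma,S)$ is non-bipartite and has a $(p,q)$-coloring $c$ with $p/q<\frac{2k}{k-1}$. Lift each edge $x\sim xs$ to an integer displacement $\delta\in[q,p-q]\cup[-(p-q),-q]$ representing $c(xs)-c(x)$ mod $p$. The condition $p/q<2k/(k-1)$ forces every closed walk of length at most $2k$ whose total displacement is $0$ to have even length. Equivalently, in the $2k$-fundamental group framework (via the neighborhood complex), the winding map to $\ZZ$ kills the $4$-cycles and commutation squares but detects odd short cycles.

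For $s\in S$ of order $d$, the walk $x,xs,\dots,xs^d=x$ has winding $w(s)$ with $d\cdot(\text{avg step})=w(s)\,p$. Abelianness makes the winding additive along commuting squares, so it descends to a homomorphism from the subgroup generated by $S$ (modulo bipartite relations) to $\ZZ$. Torsion then forces a contradiction. Concretely, an odd relation among the elements of $S$ gives an odd closed walk, and it can be shortened using commutativity and $x^{\exp}=1$ to a closed walk of length $\le k$ when $\exp(\Gamma)\le k$, or to a closed walk of length $\le 2k$ with winding $0$ when $\exp(\Gamma)=2m$ with $m\le k$. Either contradicts the bound.

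\textbf{Main obstacle.} The key step is this $(1)\Rightarrow(2)$ shortening-and-winding lemma, especially the even-exponent case $\exp(\Gamma)=2m$ with $m\le k$, where naive short odd cycles need not exist. I would first try to prove that a homomorphism $\Cay(\Gamma,S)\to K_{p/q}$ induces a group homomorphism from the $2k$-fundamental group of the Cayley graph into $\pi_1^{2k}(K_{p/q})\cong\ZZ$. That group is torsion-free, so elements of order $2m$ must act trivially. Then I would derive that some odd closed walk is null, which is impossible.
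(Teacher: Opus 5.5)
Your constructions for $(3)\Rightarrow(1)$ and $(4)\Rightarrow(1)$ are correct, and somewhat more uniform than the paper's. You realize $K_{2d/(d-1)}=\Cay(\ZZ/2d\ZZ,\{d-1,d,d+1\})$ directly and pull target sets back along $\ZZ\to\ZZ/n\ZZ$ or along a projection onto a cyclic summand. The paper instead writes down finite sets mapping onto $C_q$, $M_{2q'}$ or $C_{q'}\square K_2$. Your pulled-back sets are infinite when $\Gamma$ is infinite, which the statement permits. One slip: a torsion group of infinite exponent need not contain elements of large odd prime order or of order $2^a$ (for example, the Pr\"ufer $3$-group). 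This costs nothing, because any element of order $>2k$ already falls under your case (a).

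The genuine gap is $(1)\Rightarrow(2)$, which is the substance of the theorem; your sketch does not prove it.
\begin{itemize}
\item The ``shortening'' step cannot work. For $N$ even, $\Cay((\ZZ/2\ZZ)^N,\{e_1,\dots,e_N,e_1+\cdots+e_N\})$ has odd girth $N+1$, and its shortest odd relation uses each generator once, so there is nothing to shorten. Also, for even $\exp(\Gamma)\le k$ there need be no odd closed walk of length $\le k$: take $K_4=\Cay((\ZZ/2\ZZ)^2,\{a,b,a+b\})$ with $k=2$.
\item A homomorphism from $\langle S\rangle$ to $\ZZ$ is zero, since $\Gamma$ is torsion. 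The winding lives on closed walks, i.e.\ on the relation lattice $R[S]$.
\item With the paper's indexing the relevant group is $\pi_1^k$, not $\pi_1^{2k}$. For $k=2$, the square of the triangle is already null in $\pi_1^4(K_3)$, although $\chi_c(K_3)=3<2k/(k-1)$.
\end{itemize}

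The missing idea is to take multiples instead of shortening, and to show that an odd closed walk $\gamma=(s_1,\dots,s_\ell)$ is torsion.
\begin{itemize}
\item Let $n\le k$ with $\exp(\Gamma)\mid 2n$.
\item Because $\Gamma$ is abelian, swapping two consecutive steps changes a single vertex, which is a (B)$_2$ move. So the $k$-homotopy class of a closed walk depends only on its multiset of steps. Hence $[\gamma]_k^{2n}$ is the product of the classes of the loops $(s_i,\dots,s_i)$ of length $2n$.
\item Each such loop is null. Since $s^n=s^{-n}$, one (B)$_k$ move, changing $n-1<k$ vertices, replaces the first $n$ steps $s$ by $n$ steps $s^{-1}$. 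Backtracking then cancels everything. This is precisely where $n\le k$ is used, and nothing in your sketch plays this role.
\end{itemize}

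To finish, use the centered winding $W=\frac{2}{p}\sum_i(\delta_i-\frac{p}{2})$, with lifts $\delta_i\in[q,p-q]$.
\begin{itemize}
\item $W$ is additive, and $W\equiv\length(\gamma)\pmod 2$ on closed walks.
\item $W$ is invariant under (A) and (B)$_k$ moves when $p/q<2k/(k-1)$. Indeed, two subpaths of equal length $\ell\le k$ with common endpoints have centered sums differing by a multiple of $p$ of absolute value at most $\ell(p-2q)<p$, so the sums are equal.
\item Then $2n\,W(\gamma)=0$ with $W(\gamma)$ odd, a contradiction.
\end{itemize}
The paper reaches the torsion statement through the presentation in Theorem~\ref{theorem presentation}, and then invokes Theorem~\ref{theorem circular} as a black box in place of this winding argument.
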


Note that García-Marco, Knauer, and Menara \cite{GKM} recently proved that the circular chromatic number of every semiminimal Cayley graph of an abelian or generalized dihedral group is at most $4$. We also note that the implication from (1) to (2) in the case $k=2$ of Theorem~\ref{theorem main} was already proved in our previous paper (see \cite[Theorem~6.2]{EM}).

Theorem~\ref{theorem main} follows from Theorems~\ref{theorem existence}, \ref{theorem fg} and~\ref{theorem other direction} below. Our proof uses the \emph{$k$-fundamental group} $\pi_1^k(G,v)$ of a based graph $(G,v)$, introduced by the second author \cite{MatsushitaJMSUT}. For a positive integer $k$, the $k$-fundamental group $\pi_1^k(G,v)$ is a group associated to a based graph $(G,v)$. The $k$-fundamental group is related to Lov\'asz's neighborhood complex \cite{MatsushitaJMSUT}, and is known to have notable applications to topological graph theory \cite{EM}, multiplicative graphs \cite{WrochnaJCTB}, and Hom complexes of graphs \cite{MatsushitaJCTB}. In particular, the $2$-fundamental group gives an obstruction to the $3$-colorability, and Krebs and Sankar \cite{KS} proved Theorem~\ref{theorem KS} obtaining a presentation of the $2$-fundamental group of a Cayley graph of an abelian group. We extend their presentation to $k$-fundamental groups and combine it with the circular-coloring obstruction established in \cite{EM}.

As was mentioned, the $2$-fundamental group $\pi_1^2(G,v)$ is closely related to the fundamental group of the neighborhood complex $\NNN(G)$ introduced by Lov\'asz \cite{Lovasz} in his celebrated proof of Kneser's conjecture. As an application of the $2$-fundamental groups, Krebs and Sankar \cite{KS} proved the following result.

\begin{theorem}[{Krebs--Sankar \cite{KS}}]\label{theorem neighborhood}
Let $G$ be a connected non-bipartite graph. Suppose that $H_1(\NNN(G); \ZZ)$ is a torsion group. Then $G$ is not $3$-colorable.
\end{theorem}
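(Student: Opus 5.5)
The plan is to assume a proper $3$-coloring exists, that is, a graph homomorphism $c \colon G \to K_3$, and derive a contradiction by passing to neighborhood complexes and first homology. Recall that $\NNN(G)$ is the simplicial complex whose simplices are the finite vertex sets having a common neighbor. A homomorphism maps common neighbors to common neighbors, so $c$ induces a simplicial map $c_* \colon \NNN(G) \to \NNN(K_3)$. In $K_3$ on $\{0,1,2\}$ the maximal simplices of $\NNN(K_3)$ are $\{1,2\},\{0,2\},\{0,1\}$. Hence $\NNN(K_3)$ is the boundary of a triangle, homeomorphic to $S^1$, and $H_1(\NNN(K_3);\ZZ)\cong \ZZ$. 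Since $H_1(\NNN(G);\ZZ)$ is torsion and $\ZZ$ is torsion-free, the induced map $H_1(c_*)$ must be zero. The contradiction will come from exhibiting a $1$-cycle in $\NNN(G)$ whose image is nonzero.

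To build this cycle, use non-bipartiteness to pick a closed walk $v_0 v_1 \cdots v_{m-1} v_0$ of odd length $m$ in $G$, with indices taken mod $m$. For each $i$, $v_i$ and $v_{i+2}$ have the common neighbor $v_{i+1}$, so $\{v_i, v_{i+2}\}$ is a simplex (possibly a single vertex) of $\NNN(G)$. Since $m$ is odd, the sequence $v_0, v_2, v_4, \dots, v_{2m}=v_0$ is a closed edge path $\gamma$ in $\NNN(G)$ of length $m$. It traverses the original walk twice, taking every other vertex. This $\gamma$ defines a class $[\gamma]\in H_1(\NNN(G);\ZZ)$, and it suffices to show that $c_*[\gamma]\neq 0$ in $H_1(\NNN(K_3);\ZZ)\cong\ZZ$.

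The remaining step, which I expect to be the main computation, is a winding-number count. Write $c(v_{i+1}) - c(v_i) = a_i \in \{\pm 1\}$ modulo $3$, and let $w = \frac{1}{3}\sum_{i=0}^{m-1} a_i$. This is an integer because the walk closes up. It is nonzero because $\sum a_i$ is a sum of $m$ terms each $\pm1$, hence odd.

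Now identify $\NNN(K_3)$ with the cycle $0\!-\!1\!-\!2\!-\!0$, since $\{0,1\}$, $\{1,2\}$, $\{0,2\}$ are its edges. The step of $c_*\gamma$ from $c(v_{2j})$ to $c(v_{2j+2})$ has displacement $a_{2j}+a_{2j+1}\in\{2,0,-2\}$. This is a degenerate step when the displacement is $0$, and otherwise an edge traversed with signed displacement $\mp 1$, because $\pm 2 \equiv \mp 1 \pmod 3$. The total of $a_{2j}+a_{2j+1}$ over $j=0,\dots,m-1$ counts every $a_i$ exactly twice, so it equals $2\sum a_i = 6w$. Therefore the signed number of edge traversals of $c_*\gamma$ is $-\frac{1}{2}\cdot 6w = -3w$. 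This means $c_*\gamma$ winds $-w \ne 0$ times around the triangle, so $c_*[\gamma] = -w$ times a generator, which is nonzero.

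This contradicts $H_1(c_*)=0$, so $G$ is not $3$-colorable. Connectedness is not really needed beyond making the statement natural. Care is only required in checking that degenerate steps (when $v_i=v_{i+2}$, or colors coincide) contribute nothing to the simplicial chain, which is immediate.
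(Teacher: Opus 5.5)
Your proof is correct, and it takes a genuinely different and more elementary route than the paper.

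The paper passes to the box complex. Since $\BBB(G)\simeq\NNN(G)$ is connected, it builds a $\ZZ_2$-map $f\colon S^1\to\BBB(G)$ by joining a path from $a$ to $\tau(a)$ with its $\tau$-image. It then uses $\chi(G)\ge\ind_{\ZZ_2}(\BBB(G))+2$ to obtain a $\ZZ_2$-map $g\colon\BBB(G)\to S^1$. Finally, the odd-degree theorem for equivariant self-maps of $S^1$ shows $g_*$ is nonzero on $H_1$, which is impossible on a torsion group.

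You instead use only functoriality of $\NNN$ along the coloring $c\colon G\to K_3$ and the identification $\NNN(K_3)\cong S^1$. Your cycle $v_0,v_2,v_4,\dots$, obtained by running the odd closed walk twice, is the combinatorial counterpart of the equivariant loop $f$. Your parity count replaces the odd-degree theorem: $3w=\sum a_i$ is odd, so $w$ itself is odd, which is exactly the ``odd degree'' made explicit.

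What your version buys:
\begin{itemize}
\item It is self-contained: no box complex, no homotopy equivalence $\NNN(G)\simeq\BBB(G)$, no index bound, and no Borsuk--Ulam-type input.
\item It dispenses with connectedness, as you observe.
\item It gives slightly more: if $G$ is non-bipartite and $3$-colorable, the explicit class $[\gamma]$ has infinite order in $H_1(\NNN(G);\ZZ)$.
\end{itemize}

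The paper's version needs no explicit computation. It is an instance of the standard $\ZZ_2$-index template (equivariant maps from spheres plus the odd-degree theorem), which is the framework behind the topological lower bounds it cites and the natural setting for higher-dimensional analogues.
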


In Appendix~\ref{appendix neighborhood}, we give an alternative short proof of Theorem~\ref{theorem neighborhood} using box complexes \cite{DM} and equivariant maps.

\medskip
\noindent\textbf{Organization of the paper.}
Section~\ref{section preliminaries} reviews our conventions for graphs, circular chromatic numbers, and $k$-fundamental groups, including the circular-coloring obstruction used in the proof. In Section~\ref{section existence}, we construct Cayley graphs with small circular chromatic number and prove Theorem~\ref{theorem existence}. Section~\ref{section fundamental groups} gives a presentation of the $k$-fundamental group of a Cayley graph of an abelian group and applies it to prove Theorem~\ref{theorem other direction}, completing the proof of Theorem~\ref{theorem main}. Appendix~\ref{appendix neighborhood} contains the alternative topological proof of Theorem~\ref{theorem neighborhood}.

\section{Preliminaries}\label{section preliminaries}

We recall the definitions and facts needed in the sequel. For general background on graphs and graph homomorphisms, we refer the reader to \cite{GR,HN}.

\subsection{Graphs}

A graph is a pair $G = (V(G), E(G))$ consisting of a set $V(G)$ and a symmetric subset $E(G)$ of $V(G) \times V(G)$; thus, $(v,w) \in E(G)$ implies $(w,v) \in E(G)$. Our graphs are undirected and have no multiple edges, but they may have loops and need not have finitely many vertices.

For graphs $G$ and $H$, a \emph{graph homomorphism} from $G$ to $H$ is a map $f \colon V(G) \to V(H)$ such that $(v,w) \in E(G)$ implies $(f(v), f(w)) \in E(H)$. Let $K_n$ denote the complete graph on $n$ vertices: $V(K_n) = \{ 1, \cdots, n\}$, and vertices $i,j \in V(K_n)$ are adjacent if and only if $i \ne j$. An \emph{$n$-coloring} of a graph $G$ is a graph homomorphism from $G$ to $K_n$. For $n \ge 3$, the \emph{cycle graph} $C_n$ is defined by $V(C_n) = \ZZ / n\ZZ$ and $E(C_n) = \{ (x,x \pm 1) \mid x \in \ZZ / n\ZZ\}$.

\subsection{Circular chromatic numbers}

Let $S^1 = \RR / \ZZ$, equipped with the distance induced by the Euclidean distance $d_\RR$ on $\RR$. Thus, for $\alpha , \beta \in S^1$, we write
\[ d(\alpha, \beta) = \inf \{ d_\RR(x,y) \mid x \in \alpha, y \in \beta \}.\]
For $0 < a$, the \emph{Borsuk graph} $\Bor(S^1, a)$ is defined by
\[ V(\Bor(S^1, a)) = S^1, \quad E(\Bor(S^1, a)) = \{ (\alpha, \beta) \in S^1 \times S^1 \mid d(\alpha, \beta) \ge a\}.\]
If $a \le b$, then $\Bor(S^1, b)$ is a subgraph of $\Bor(S^1, a)$. Consequently, for $0 < r \le s$, the inequality $s^{-1} \le r^{-1}$ implies that $\Bor(S^1, r^{-1})$ is a subgraph of $\Bor(S^1, s^{-1})$.

\begin{definition}
The \emph{circular chromatic number} of a graph $G$ is
\[ \chi_c(G) = \inf \{ r \mid \textrm{There is a graph homomorphism from $G$ to $\Bor(S^1, r^{-1})$}\}.\]
\end{definition}

For example, odd cycles satisfy
\[ \chi_c(C_{2k+1}) = 2 + \frac{1}{k}.\]
The circular and ordinary chromatic numbers are related by $\chi(G) = \lceil \chi_c(G) \rceil$. Thus, the circular chromatic number is a refinement of the ordinary chromatic number.

We will also use the formulation of circular chromatic number in terms of circular complete graphs. Let $m$ and $n$ be positive integers with $m \ge 2n$. The \emph{circular complete graph} $K_{m/n}$ has vertex set $V(K_{m / n}) = \ZZ / m\ZZ$ and edge set
\[ E(K_{m/n}) = \{ (x,y) \mid d(x,y) \ge n \},\]
where $d(x,y) = \inf \{ d_\RR (a,b) \mid a \in x, b \in y \}$. In terms of these graphs, the circular chromatic number is given by
\[ \chi_c(G) = \inf \left\{ \frac{m}{n} \mid \text{There is a graph homomorphism from $G$ to $K_{m/n}$} \right\}.\]

A basic property of circular complete graphs is $\chi_c(K_{m/n}) = m / n$. The graphs $K_{2k/(k-1)}$ play a central role in this paper. We will use the following descriptions of them.

\begin{lemma}\label{lemma ladder}
Let $k$ be an integer with $k \ge 2$. Then the following hold:
\begin{enumerate}[(1)]
\item If $k$ is odd, then $K_{2k /(k-1)}$ is the graph $C_k \square K_2$, where $\square$ denotes the Cartesian product.
\item If $k$ is even, then $K_{2k/(k-1)}$ is the M\"obius ladder $M_{2k}$ on $2k$ vertices.
\end{enumerate}
\end{lemma}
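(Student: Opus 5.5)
Write $m = 2k$ and $n = k-1$. By definition, $V(K_{2k/(k-1)}) = \ZZ/2k\ZZ$, and $x \sim y$ iff $d(x,y) \ge k-1$. On $\ZZ/2k\ZZ$ the circular distance takes values in $\{0,1,\dots,k\}$. Hence $x \sim y$ exactly when $y - x \in \{k-1, k, k+1\}$. So every vertex has degree $3$: it is joined to its antipode $x+k$ and to the two vertices $x \pm (k-1)$. This description of the edges is the first step.

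The plan is to relabel the vertices so that the ``$\pm(k-1)$'' edges form cycles and the ``$+k$'' edges form rungs, and then read off the structure.

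\textbf{Case $k$ even.} Here $\gcd(k-1, 2k) = 1$, because $k-1$ is odd and coprime to $k$. So multiplication by $k-1$ is a bijection of $\ZZ/2k\ZZ$. Define $\phi(i) = i(k-1)$. Under $\phi$, the edges $x \sim x \pm (k-1)$ become $i \sim i \pm 1$, giving a Hamiltonian cycle $C_{2k}$. The antipodal edge $x \sim x+k$ becomes $i \sim i + k(k-1)^{-1}$. Since $k(k-1) \equiv k \pmod{2k}$ (as $k-1$ is odd) and $k \equiv -k$, we get $(k-1)^{-1}k \equiv k$. Thus these edges join $i$ to $i+k$, the opposite vertex on the $2k$-cycle. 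A $2k$-cycle with all antipodal chords is by definition the Möbius ladder $M_{2k}$.

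\textbf{Case $k$ odd.} Now $k-1$ is even, so the step $k-1$ does not generate $\ZZ/2k\ZZ$. Instead I use $k+1 \equiv -(k-1)$: the edges $x \sim x \pm (k-1)$ are the same as $x \sim x \pm 2 \cdot \tfrac{k-1}{2}$. Since $\gcd(k-1, 2k) = 2$, these edges split into two $k$-cycles, one on the even residues and one on the odd residues. Concretely, the even residues form the cycle $0, k-1, 2(k-1), \dots$. Because $k$ is odd, the antipodal map $x \mapsto x+k$ swaps the even and odd classes. It also commutes with translation by $k-1$. Define $\psi\colon \ZZ/k\ZZ \times \ZZ/2\ZZ \to \ZZ/2k\ZZ$ by $\psi(i, \epsilon) = i(k-1) + \epsilon k$. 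I will check that $\psi$ is a bijection: $i(k-1)$ runs over all even residues because $(k-1)/2$ is invertible mod $k$, and adding $k$ gives all the odd residues. Under $\psi$, the $\pm(k-1)$ edges become $(i,\epsilon) \sim (i \pm 1, \epsilon)$ and the antipodal edges become $(i,0) \sim (i,1)$. This is exactly $C_k \square K_2$.

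The only mildly delicate points are the degenerate small cases and the arithmetic of $(k-1)^{-1}k$. For $k = 2$, the graph $K_{4/1} = K_4$ equals $M_4$, where the cycle $C_4$ plus its two diagonals gives $K_4$; the statement is consistent provided $M_4$ is read this way. For $k = 3$, $C_3 \square K_2$ is the prism $K_{6/2}$, which is fine. There is no real obstacle beyond verifying that the bijections $\phi$ and $\psi$ carry edges to edges in both directions. This follows because both graphs are $3$-regular and each edge type maps onto the corresponding edge type.
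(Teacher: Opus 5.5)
Your proof is correct and complete. The paper states this lemma without proof, and your argument supplies exactly the routine check it leaves to the reader: you identify the edges of $K_{2k/(k-1)}$ as the differences $\{k-1,k,k+1\}$ in $\ZZ/2k\ZZ$, then relabel. For $k$ even you use the automorphism $i\mapsto i(k-1)$; for $k$ odd you use $(i,\epsilon)\mapsto i(k-1)+\epsilon k$ from $\ZZ/k\ZZ\times\ZZ/2\ZZ$.

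Three small tidy-ups:
\begin{enumerate}[(1)]
\item For $k$ odd, say explicitly that $\psi$ is well defined because $k(k-1)\equiv 0 \pmod{2k}$.
\item Instead of appealing to $3$-regularity, note that $\phi$ and $\psi$ are group isomorphisms carrying the connection sets $\{\pm1,k\}$ of $M_{2k}$ and $\{(\pm1,0),(0,1)\}$ of $C_k\square K_2$ onto $\{k-1,k,k+1\}$. This makes them Cayley-graph isomorphisms and gives edge preservation in both directions at once.
\item Your $k=2$ hedge is unnecessary: with the paper's definition, $M_4$ literally has all nonzero differences mod $4$, so $M_4=K_4$.
\end{enumerate}
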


Here, the \emph{M\"obius ladder} $M_{2k}$ is the graph with vertex set $\ZZ / 2k \ZZ$ and edge set
\[ E(M_{2k}) = \{ (x,y) \mid x - y \in \{\pm 1, k\}\}.\]
Lemma~\ref{lemma ladder} gives $\chi_c(M_{2k}) = 2k / (k-1)$ when $k$ is even. When $k$ is odd, there are graph homomorphisms in both directions between $K_{2k / (k-1)} \cong C_k \square K_2$ and $C_k$. Hence we have $\chi_c(C_k) = 2k / (k-1)$. We mention these formulas for later use.

\begin{lemma}\label{lemma cycle ladder}
Let $m$ be an integer such that $m \ge 2$. Then the following hold:
\begin{enumerate}[(1)]
\item If $m$ is odd, then $\chi_c(C_m) = 2 + \frac{2}{m-1}$.
\item If $m$ is even, then $\chi_c(M_{2m}) = 2 + \frac{2}{m - 1}$.
\end{enumerate}
\end{lemma}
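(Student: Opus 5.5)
The plan is to derive Lemma~\ref{lemma cycle ladder} directly from Lemma~\ref{lemma ladder}, using $\chi_c(K_{p/q}) = p/q$ and the fact that graphs with homomorphisms in both directions have the same circular chromatic number. First, the arithmetic identity
\[ \frac{2m}{m-1} = 2 + \frac{2}{m-1} \]
shows that both parts of Lemma~\ref{lemma cycle ladder} reduce to identifying $\chi_c(C_m)$, respectively $\chi_c(M_{2m})$, with $\chi_c(K_{2m/(m-1)})$.

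For (2), let $m \ge 2$ be even. Lemma~\ref{lemma ladder}(2) with $k = m$ gives $K_{2m/(m-1)} \cong M_{2m}$, so $\chi_c(M_{2m}) = 2m/(m-1)$. The only point to check is that $m = 2$ is permitted: $K_{4/1} = K_4$ and $M_4$ is the graph on $\ZZ/4\ZZ$ with differences $\pm 1, 2$, which is $K_4$. So the formula gives $4 = 2 + 2$, consistent with the statement.

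For (1), let $m$ be odd; the hypothesis $m \ge 2$ then forces $m \ge 3$, so $C_m$ is defined. Lemma~\ref{lemma ladder}(1) gives $K_{2m/(m-1)} \cong C_m \square K_2$. We then need homomorphisms in both directions between $C_m \square K_2$ and $C_m$:
\begin{enumerate}[(a)]
\item $C_m \to C_m \square K_2$ is the inclusion of one layer, $x \mapsto (x, 0)$.
\item $C_m \square K_2 \to C_m$ is the map $(x, \varepsilon) \mapsto x + \varepsilon \cdot t$, where $t$ is chosen with $t \equiv \pm 1$ plus the requirement that the "rung" edges $(x,0)(x,1)$ are mapped to edges, so that $t \in \{\pm 1\}$.
\end{enumerate}
For (b), the concrete choice is $(x, \varepsilon) \mapsto x + \varepsilon$. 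Layer edges $(x,\varepsilon)(x \pm 1,\varepsilon)$ map to $x+\varepsilon,\ x \pm 1 + \varepsilon$, which are adjacent in $C_m$. Rung edges $(x,0)(x,1)$ map to $x, x+1$, which are also adjacent. Hence this map is a homomorphism.

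Circular chromatic number is monotone under homomorphisms, because homomorphisms compose with maps to $\Bor(S^1, r^{-1})$. It follows that $\chi_c(C_m) = \chi_c(K_{2m/(m-1)}) = 2m/(m-1)$.

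There is no real obstacle here. The only steps needing care are the edge case $m = 2$ in (2) and verifying the explicit homomorphism $C_m \square K_2 \to C_m$ in (1).
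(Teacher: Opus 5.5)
Your proposal is correct and follows the paper's argument. Part (2) is read off from Lemma~\ref{lemma ladder}(2) together with $\chi_c(K_{m/n}) = m/n$, and part (1) uses $K_{2m/(m-1)} \cong C_m \square K_2$ plus homomorphisms in both directions between $C_m \square K_2$ and $C_m$; you additionally write out the maps $x \mapsto (x,0)$ and $(x,\varepsilon) \mapsto x+\varepsilon$ and check the case $m=2$, which the paper leaves implicit.
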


\subsection{\texorpdfstring{$k$}{k}-fundamental groups}\label{subsection k-fundamental group}

We recall the definition of the $k$-fundamental group $\pi_1^k(G,v)$, introduced in \cite{MatsushitaJMSUT}, and the circular-coloring obstruction that will be used below.

Let $P_n$ be the \emph{path graph} on $n + 1$ vertices, with $V(P_n) = \{ 0,1, \cdots, n\}$ and $E(P_n) = \{ (i,j) \mid |i-j| = 1\}$.

A \emph{based graph} is a pair $(G,v)$ consisting of a graph $G$ and a distinguished vertex $v$. A \emph{closed walk} of length $n$ in $(G,v)$ is a graph homomorphism $\gamma \colon P_n \to G$ such that $\gamma(0) = \gamma(n) = v$. We denote its length by $\length(\gamma)$ and write $\Omega(G,v)$ for the set of closed walks in $(G,v)$. For a positive integer $k$, let $\simeq_k$ be the equivalence relation generated by the following two types of elementary moves between $\gamma, \gamma' \in \Omega(G,v)$:
\begin{itemize}
\item[(A)] $\length(\gamma') = \length(\gamma) + 2$, and there is $i \in \{ 0,1, \cdots, \length(\gamma) \}$ such that $\gamma(j) = \gamma'(j)$ for $j \le i$ and $\gamma(j) = \gamma'(j + 2)$ for $j \ge i$.
\item[(B)$_k$] $\length(\gamma) = \length(\gamma')$ and $\# \{ i \in \{ 0,1, \cdots, \length (\gamma) \} \mid \gamma(i) \ne \gamma'(i) \} < k$.
\end{itemize}

More explicitly, $\gamma \simeq_k \gamma'$ if and only if there is a sequence $(\gamma_0, \cdots, \gamma_m)$ with $\gamma = \gamma_0$ and $\gamma' = \gamma_m$ such that, for every $i \in \{ 1, \cdots, m\}$, either $(\gamma_{i-1}, \gamma_i)$ or $(\gamma_i, \gamma_{i-1})$ satisfies (A) or (B)$_k$. The equivalence class of $\gamma$ is denoted by $[\gamma]_k$ and is called its \emph{$k$-homotopy class}.

\begin{definition}
Let $k$ be a positive integer. The \emph{$k$-fundamental group} of a based graph $(G,v)$ is the quotient set $\pi_1^k(G,v) = \Omega(G,v) / \simeq_k$, equipped with the operation induced by concatenation of closed walks.
\end{definition}

\begin{remark} \label{remark consecutive}
For $\gamma, \gamma' \in \Omega(G,v)$, we consider the following condition:
\begin{itemize}
\item[(B)$'_k$] $\length(\gamma) = \length(\gamma')$ and there is $i \in \{ 0,1, \cdots, \length(\gamma)\}$ and $m < k$ such that $\gamma(j) = \gamma'(j)$ for $j \not\in \{ i+1, \cdots, i+m \}$.
\end{itemize}
Then it is straightforward to see that $\simeq_k$ is generated by the relations (A) and (B)$'_k$.
\end{remark}

To describe this operation, let $\gamma \colon P_m \to G$ and $\gamma' \colon P_n \to G$ be closed walks in $(G,v)$. Their \emph{concatenation} is the closed walk $\gamma \cdot \gamma' \colon P_{m+n} \to G$ of length $m + n$ defined by
\[ \gamma \cdot \gamma'(i) = \begin{cases}
\gamma(i) & (i \le m) \\
\gamma'(i-m) & (i \ge m).
\end{cases}\]
Concatenation induces the group operation on $\pi_1^k(G,v)$.

The following result relates torsion in the abelianization of the $k$-fundamental group to circular chromatic number.

\begin{theorem}[{\cite[Theorem~5.3]{EM}}]\label{theorem circular}
Let $G$ be a connected graph and let $k \ge 2$ be an integer. Suppose that there is a closed walk $\gamma$ of odd length in $G$ such that the image of $[\gamma]_k \in \pi_1^k(G)$ in the abelianization $\pi_1^k(G) / [\pi_1^k(G), \pi_1^k(G)]$ is a torsion element. Then
\[ \chi_c(G) \ge 2 + \frac{2}{k-1}.\]
\end{theorem}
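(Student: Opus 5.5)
The plan is to argue by contradiction: from a circular coloring that is too good we build a homomorphism $\pi_1^k(G,v) \to \frac{1}{2}\ZZ$ which sends every odd closed walk to a nonzero element. Since $\frac{1}{2}\ZZ$ is abelian and torsion-free, this contradicts the hypothesis.

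\emph{Setup.} Suppose $\chi_c(G) < 2 + \frac{2}{k-1} = \frac{2k}{k-1}$. By the definition of $\chi_c$, there is some $r < \frac{2k}{k-1}$ and a graph homomorphism $c \colon G \to \Bor(S^1, r^{-1})$. Put $a = r^{-1}$, so that $a > \frac{k-1}{2k}$, and set $\delta = \frac12 - a < \frac{1}{2k}$. For adjacent vertices $x,y$ we have $c(y) - c(x) \in [a, 1-a]$ modulo $1$. Hence $c(y) - c(x) - \frac12$ lies within distance $\delta$ of $0$ in $S^1$.

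\emph{The invariant.} Let $\gamma \colon P_n \to G$ be a closed walk at $v$. Define $\varphi_i = c(\gamma(i)) - \frac{i}{2} \in S^1$. By the setup, consecutive values $\varphi_i, \varphi_{i+1}$ are at distance at most $\delta < \frac12$. So there is a unique lift $\tilde\varphi_0, \dots, \tilde\varphi_n \in \RR$ with $\tilde\varphi_0 \in \varphi_0$ and $|\tilde\varphi_{i+1} - \tilde\varphi_i| \le \delta$ for all $i$. Set $D(\gamma) = \tilde\varphi_n - \tilde\varphi_0$. Since $\gamma(n) = \gamma(0)$, we get $\varphi_n = \varphi_0 - \frac n2$, so $D(\gamma) \in \ZZ + \frac n2$. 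In particular $D(\gamma) \in \ZZ + \frac12$, hence $D(\gamma) \neq 0$, whenever $n$ is odd. Concatenation shifts the indices of the second walk by $m$. The resulting phase shift $\frac m2$ applies uniformly to the whole second walk, so its lifted steps are unchanged and $D(\gamma \cdot \gamma') = D(\gamma) + D(\gamma')$.

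\emph{Invariance under the moves (the main step).} It remains to check that $D$ is constant on $k$-homotopy classes. By Remark~\ref{remark consecutive}, it suffices to treat the moves (A) and (B)$'_k$.

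For (A), suppose $\gamma'$ inserts $x, y, x$ at positions $i, i+1, i+2$, where $x = \gamma(i)$. Then $\varphi'_{i+2} = c(x) - \frac i2 - 1 = \varphi_i$ in $S^1$. The two inserted lifted steps are exact negatives of each other, so the lift returns to the same real number at position $i+2$. After that point $\gamma'$ follows $\gamma$ with indices shifted by $2$, which changes the phase by $1 \equiv 0$, so the remaining steps agree.

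For (B)$'_k$, $\gamma$ and $\gamma'$ differ only at positions $i+1, \dots, i+m$ with $m < k$. Both lifts start from the same real value at position $i$. At position $i+m+1$ they represent the same point of $S^1$, so they differ by an integer. Each lift moves by at most $(m+1)\delta < \frac{k}{2k} = \frac12$ over the block. Hence the two endpoint values differ by less than $1$, so the integer difference is $0$. Afterwards the two walks coincide, so $D(\gamma) = D(\gamma')$. This strict counting, which uses $m+1 \le k$ together with $\delta < \frac1{2k}$, is the delicate point of the proof and is exactly where the bound $\frac{2k}{k-1}$ enters.

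\emph{Conclusion.} Thus $D$ induces a group homomorphism $\pi_1^k(G,v) \to \frac12 \ZZ$. Because the target is abelian, this factors through the abelianization. Because the target is torsion-free, every torsion element of the abelianization maps to $0$. However, the odd closed walk $\gamma$ in the hypothesis satisfies $D(\gamma) \in \ZZ + \frac12$, so $D(\gamma) \ne 0$, a contradiction. If $\gamma$ is based at some vertex other than $v$, we first conjugate it by a walk to $v$, using that $G$ is connected. This preserves both the parity of the length and the torsion property in the abelianization. Therefore $\chi_c(G) \ge 2 + \frac{2}{k-1}$.
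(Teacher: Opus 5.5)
The paper contains no proof of this statement. It is quoted from \cite[Theorem~5.3]{EM} and used only as a black box in the proof of Theorem~\ref{theorem other direction}, so there is no in-paper argument to compare with. Your proof is correct and self-contained.

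From a homomorphism $c\colon G\to\Bor(S^1,r^{-1})$ with $r<\frac{2k}{k-1}$, you build an explicit winding-number homomorphism $2D\colon\pi_1^k(G,v)\to\ZZ$ whose reduction modulo $2$ is the parity of the length. Invariance under (A) is the cancellation of the two lifted steps of a backtrack. Invariance under (B)$'_k$ is exactly the estimate $(m+1)\delta\le k\delta<\frac12$, which is where $\frac{2k}{k-1}$ enters. The bound is attained, e.g.\ by $C_k$ for odd $k$ and $M_{2k}$ for even $k$, by Theorem~\ref{theorem other direction} together with Lemmas~\ref{lemma even} and~\ref{lemma odd}.

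Nothing in your argument uses finiteness of $G$. This matters, because the paper applies the theorem to Cayley graphs of possibly infinite abelian groups of bounded exponent. You also get slightly more than the statement asks for: when $\chi_c(G)<\frac{2k}{k-1}$, the parity homomorphism $\pi_1^k(G,v)\to\ZZ/2\ZZ$ lifts to $\ZZ$.

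A few small points should be made explicit.
\begin{enumerate}[(1)]
\item You tacitly use $\delta\ge 0$ in $(m+1)\delta\le k\delta$. This holds because the odd closed walk supplies an edge, which forces $a\le\frac12$.
\item The lift is unique only once $\tilde\varphi_0$ is fixed. It is cleaner to define $D(\gamma)$ as the sum over $j$ of the unique representative in $[-\delta,\delta]$ of $\varphi_{j+1}-\varphi_j$. That makes additivity under concatenation and independence of choices immediate.
\item In (B)$'_k$, since $\gamma$ and $\gamma'$ agree at positions $0$ and $n$, you may shrink the block so that $i+m+1\le n$. This guarantees the comparison position exists.
\item The change-of-basepoint remark is unnecessary. $[\gamma]_k\in\pi_1^k(G)$ already means $\gamma$ is based at the base vertex, and your $D$ is defined for any choice of base vertex.
\end{enumerate}
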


\section{Cayley graphs with small circular chromatic number}\label{section existence}

In this section, we establish the existence results (Theorems~\ref{theorem existence} and \ref{theorem fg}) used in the proof of Theorem~\ref{theorem main}. Recently, García-Marco, Knauer, and Menara \cite{GKM} proved that the circular chromatic number of every semiminimal Cayley graph of an abelian group is at most $4$, and Lemma~\ref{lemma odd} in this paper is deduced from their results (see \cite[Theorem~4.3]{GKM}). Nevertheless, for the convenience of the reader, we provide direct proofs here.

\begin{theorem}\label{theorem existence}
Let $k \ge 2$ be an integer, and let $\Gamma$ be an abelian group such that
\[ \exp(\Gamma) \not\in \{ 1,2, \cdots, k\} \cup \{ 2,4, \cdots, 2k\}.\]
Then there is a symmetric subset $S$ of $\Gamma$ such that
\[ 2 < \chi_c(\Cay(\Gamma, S)) \le 2 + \frac{2}{k}.\]
\end{theorem}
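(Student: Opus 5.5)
The plan is to reduce to finding a single element $g \in \Gamma$ of suitable order, or a suitable subgroup, and then to build $S$ inside it so that $\Cay(\Gamma,S)$ is a disjoint union of copies of a Cayley graph on the cyclic subgroup $\langle g\rangle$. Cosets of a subgroup $H \supseteq \langle S\rangle$ give components isomorphic to $\Cay(H,S)$. Hence $\chi_c(\Cay(\Gamma,S)) = \chi_c(\Cay(H,S))$, and it suffices to work in $H$.

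\textbf{Case analysis on $\exp(\Gamma)$.} Suppose $\exp(\Gamma)\notin\{1,\dots,k\}\cup\{2,4,\dots,2k\}$. Then one of the following holds.

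\emph{(a)} $\Gamma$ contains an element of odd order $m$ with $m \ge k+1$. Equivalently, $m\ge k+2$ when the parity of $k$ forces this; in any case $m$ is odd and $m>k$.

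\emph{(b)} $\Gamma$ contains an element of even order $2m$ with $m>k$.

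\emph{(c)} $\Gamma$ contains an element of infinite order.

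To see this, if every element has finite order and $\exp(\Gamma)=\infty$, the orders are unbounded, so some order exceeds $2k$. Such an element either has an odd-order element of order $>k$ among its powers or has even order $>2k$. The passage to powers needs care. If $n=\exp(\Gamma)$ is finite, it is the order of some element, since $\Gamma$ is abelian. If $n$ is odd, then $n>k$. If $n$ is even, then either $n>2k$ or $n\le 2k$. In the latter case $n$ lies in $\{2,4,\dots,2k\}$, which is excluded. So $n$ is odd with $n>k$, or even with $n>2k$. The unbounded-order case is handled similarly.

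\textbf{Constructions.} In case (a), take $S=\{g^{\pm1}\}$. Then $\Cay(\langle g\rangle,S)\cong C_m$ with $m$ odd and $m\ge k+1$. By Lemma~\ref{lemma cycle ladder}, $\chi_c = 2+\frac{2}{m-1}$, which lies in $(2, 2+\frac2k]$ because $m-1\ge k$.

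In case (b), take $S=\{g^{\pm1}, g^{m}\}$ in $\langle g\rangle\cong\ZZ/2m\ZZ$. Note that $g^m$ is an involution, so $S$ is symmetric. The resulting graph is exactly the Möbius ladder $M_{2m}$.
\begin{itemize}
\item If $m$ is even, Lemma~\ref{lemma cycle ladder} gives $\chi_c=2+\frac{2}{m-1}\le 2+\frac2k$.
\item If $m$ is odd, then $g^2$ has odd order $m>k$, and we are back in case (a) with $C_m$.
\end{itemize}
Either way we land in the required interval.

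In case (c), $\langle g\rangle\cong\ZZ$. Take $S=\{\pm1,\pm N\}$ for suitable odd $N$. Alternatively, take $S=\{\pm 1, \pm(2\ell+1)\}$ so that the graph contains an odd cycle of length $2\ell+2$-ish. This is done by choosing $S$ so that $\Cay(\ZZ,S)$ maps onto $C_{2\ell+1}$ for some $2\ell+1>k$. Concretely, use $S=\{\pm1, \pm 2\ell\}$: the map $\ZZ\to\ZZ/(2\ell+1)$ is a homomorphism to $C_{2\ell+1}$, since $2\ell\equiv -1$. This gives the upper bound $2+\frac1\ell$. The walk $1$ repeated $2\ell$ times and then $-2\ell$ gives an odd closed walk of length $2\ell+1$, so the graph is non-bipartite. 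Choosing $2\ell\ge k$ yields the bound.

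\textbf{Main obstacle.} The main care needed is in case (b), which uses the Möbius ladder identification, and in cases (a)/(b) as a whole, which rely on the lower bound $\chi_c>2$. That bound amounts to non-bipartiteness, which is immediate in (a) and (b) from the odd cycles $C_m$ and from $M_{2m}$ with $m$ even, where $m$ ones followed by the chord give an odd cycle of length $m+1$. The upper bound in (c) requires verifying the homomorphism, which is routine.
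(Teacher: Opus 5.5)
Your argument is correct and is essentially the paper's proof. You use the same split into an element of infinite order, of odd order $m>k$, or of even order $2m$ with $m>k$, with $\{g^{\pm1},g^{m}\}$ giving $M_{2m}$ when $m$ is even and $g^2$ giving copies of $C_m$ when $m$ is odd. The only variation is in case (c): for the same kind of set $\{g^{\pm1},g^{\pm2\ell}\}$ you map onto $C_{2\ell+1}$ rather than onto the M\"obius ladder $M_{4\ell}$ as the paper does. This is valid, and since $C_{2\ell+1}$ is also a subgraph it even gives the exact value $2+\frac{1}{\ell}$.

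Delete the two preliminary suggestions in (c), namely $\{\pm1,\pm N\}$ with $N$ odd and $\{\pm1,\pm(2\ell+1)\}$. When every element of $S$ is odd, reduction mod $2$ is a proper $2$-coloring of $\Cay(\ZZ,S)$, so those graphs are bipartite.
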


In Theorem~\ref{theorem fg}, we show that if $\Gamma$ is a finitely generated abelian group satisfying the assumption of Theorem~\ref{theorem existence} then $S$ can be chosen to be a generating set of $\Gamma$.

We note that when $\Gamma$ is an abelian group, then $\exp(\Gamma)$ coincides with the supremum of the order of elements in $\Gamma$. Hence Theorem~\ref{theorem existence} is deduced from Lemmas~\ref{lemma infinite}, \ref{lemma even}, \ref{lemma odd} below.

\begin{lemma}\label{lemma infinite}
Let $\Gamma$ be a group containing an element $x$ of infinite order. For every $\varepsilon > 0$, there is a symmetric subset $S$ of $\Gamma$ such that $2 < \chi_c(\Cay(\Gamma, S)) < 2 + \varepsilon$.
\end{lemma}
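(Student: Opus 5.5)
Choose an odd integer $n = 2m+1$ with $m$ large enough that $\frac{1}{m} < \varepsilon$, and take
\[ S = \{ x^{\pm 1}, x^{\pm 3}, \dots \} \quad\text{or more simply}\quad S = \{ x^{\pm 1}, x^{\pm n} \}. \]
The plan is to show that the Cayley graph $\Cay(\Gamma, S)$ is non-bipartite with circular chromatic number at most $2 + \frac{1}{m}$. With $S = \{x^{\pm 1}, x^{\pm n}\}$ we have $1 \notin S$ because $x$ has infinite order, so there are no loops. The graph is a disjoint union of copies, one for each coset of $\langle x \rangle \cong \ZZ$, of the circulant graph $\Cay(\ZZ, \{\pm 1, \pm n\})$. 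Since a coloring can be chosen coset by coset, $\chi_c(\Cay(\Gamma,S)) = \chi_c(\Cay(\ZZ,\{\pm1,\pm n\}))$, and it suffices to treat $\Gamma = \ZZ$.

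\emph{Non-bipartiteness.} The walk $0,1,2,\dots,n,0$ uses $n$ steps of $+1$ and one step of $-n$. It is therefore a closed walk of odd length $n+1+\ldots$; more precisely it has length $n+1 = 2m+2$, which is even, so this walk does not work directly. Instead I would use the generator $x^{\pm 2}$. Take $S = \{x^{\pm 2}, x^{\pm n}\}$ with $n = 2m+1$ odd. Now $n$ steps of $+2$ followed by $2$ steps of $-n$ give a closed walk of length $n+2$, which is odd. Hence $\chi_c > 2$.

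\emph{Upper bound.} I would give an explicit homomorphism $f\colon \Cay(\ZZ,\{\pm 2,\pm n\}) \to \Bor(S^1, r^{-1})$ with $r$ close to $2$, of the form $f(j) = j\theta \bmod 1$. The requirements are that $d(2\theta, 0) \ge r^{-1}$ and $d(n\theta, 0) \ge r^{-1}$. Take $\theta = \frac{1}{4} + \delta$, so that $2\theta = \frac12 + 2\delta$, whose distance to $0$ is $\frac12 - 2|\delta|$. Then choose $\delta$ so that $n\theta$ is also near $\frac12 \bmod 1$. Since $n$ is odd, $n/4 \equiv \pm\frac14 \pmod 1$, and we need $n\delta \approx \pm\frac14$. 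Setting $\delta = \frac{1}{4n}$ with the appropriate sign makes $n\theta \equiv \frac12 \pmod 1$ exactly, while $d(2\theta,0) = \frac12 - \frac{1}{2n}$. This gives
\[ r = \left(\tfrac12 - \tfrac1{2n}\right)^{-1} = \frac{2n}{n-1} = 2 + \frac{2}{n-1}, \]
which is less than $2+\varepsilon$ for large $n$. By the definition of $\chi_c$ as an infimum, we conclude $\chi_c \le 2 + \frac{2}{n-1}$.

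The main obstacle I expect is exactly the step just handled: getting the sign and parity bookkeeping right so that a single rotation angle works for both generators at once, and choosing the generators so that an odd closed walk genuinely exists. Both come down to the arithmetic of $n \bmod 4$, which I would check case by case ($n \equiv 1$ or $3 \pmod 4$), flipping the sign of $\delta$ as needed. Everything else is formal.
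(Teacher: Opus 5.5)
Your final argument, with $S=\{x^{\pm2},x^{\pm n}\}$ and $n$ odd, is correct, but it is a different construction from the paper's.

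The paper uses $S=\{x^{\pm1},x^{\pm k}\}$ and makes the \emph{second} exponent even. Then $k$ steps of $x$ followed by one step of $x^{-k}$ give an odd cycle $C_{k+1}$. For the upper bound, the paper notes that reduction modulo $2k$ is a homomorphism onto the M\"obius ladder $M_{2k}$, and quotes $\chi_c(M_{2k})=2k/(k-1)$ from Lemma~\ref{lemma cycle ladder}.

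You make the \emph{first} exponent even instead, and you check the upper bound by hand with a rotation. To settle your sign question, write $\theta=a/(2n)$, where $a$ is whichever of $(n\pm1)/2$ is odd; equivalently, $\delta=(-1)^{(n-1)/2}/(4n)$. Then $j\mapsto aj \bmod 2n$ is a homomorphism $\Cay(\ZZ,\{\pm2,\pm n\})\to K_{2n/(n-1)}\cong C_n\square K_2$. This is the same target the paper uses in the ``$q'$ odd'' case of Theorem~\ref{theorem fg}.

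Your route is self-contained: it needs neither the identification of $K_{2k/(k-1)}$ with a ladder nor the fact $\chi_c(K_{m/n})=m/n$. The paper's route is a one-line quotient argument once Lemma~\ref{lemma cycle ladder} is granted. Both give the bound $2+\frac{2}{n-1}$.

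When you write it up, make three fixes:
\begin{enumerate}
\item Drop the abandoned choice $\{x^{\pm1},x^{\pm n}\}$. It genuinely fails: with both exponents odd, the parity coloring is a proper $2$-coloring, so that graph is bipartite.
\item In the coset-reduction sentence, change $\{\pm1,\pm n\}$ to $\{\pm2,\pm n\}$.
\item Justify why non-bipartiteness gives $\chi_c>2$. A closed walk of odd length $L$ is a homomorphism $C_L\to G$, so $\chi_c(G)\ge\chi_c(C_L)=2+\frac{2}{L-1}>2$.
\end{enumerate}
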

\begin{proof}
Choose an even integer $k$ such that $2/(k-1) < \varepsilon$, and set $S = \{ x^{\pm 1}, x^{\pm k}\}$. Then $\Cay(\Gamma, S)$ is a disjoint union of copies of the graph $G_k$ defined by
\[ V(G_k) = \ZZ, \quad E(G_k) = \{ (x,y) \mid |x-y| = 1, k\}.\]
Since the graph $G_k$ contains the cycle $C_{k+1}$ as a subgraph, $\Cay(\Gamma, S)$ is non-bipartite. Moreover, the quotient map $p \colon \ZZ \to \ZZ / 2k$ induces a graph homomorphism $p \colon G_k \to M_{2k}$. Thus $\chi_c(\Cay(\Gamma, S)) \le \chi_c(M_{2k})$, and, since $k$ is even, we obtain
\[ \chi_c(\Cay(\Gamma, S)) \le \chi_c(M_{2k}) = \frac{2k}{k-1} = 2 + \frac{2}{k-1} < 2 + \varepsilon.\qedhere\]
\end{proof}

\begin{lemma}\label{lemma even}
Let $\Gamma$ be a group containing an element $x$ of even order $2m$ with $m > 1$. Then there is a symmetric subset $S$ of $\Gamma$ such that
\[ \chi_c(\Cay(\Gamma, S)) = 2 + \frac{2}{m-1}.\]
\end{lemma}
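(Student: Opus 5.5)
Mirroring the proof of Lemma~\ref{lemma infinite}, I will take $S = \{x^{\pm 1}, x^{m}\}$; note $x^m = x^{-m}$ since $x$ has order $2m$, so $S$ is symmetric. The left cosets of $\langle x \rangle$ are unions of connected components, and left multiplication is an automorphism. Hence $\Cay(\Gamma, S)$ is a disjoint union of copies of $\Cay(\langle x\rangle, S)$. Identifying $\langle x \rangle$ with $\ZZ/2m\ZZ$ via $x^i \mapsto i$, two vertices $i,j$ are adjacent exactly when $i - j \in \{\pm 1, m\}$. This is precisely the M\"obius ladder $M_{2m}$. Since the circular chromatic number of a disjoint union of copies of a graph equals that of the graph, the problem reduces to computing $\chi_c(M_{2m})$.

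If $m$ is even, Lemma~\ref{lemma cycle ladder}(2) gives $\chi_c(M_{2m}) = 2 + \frac{2}{m-1}$ directly, and we are done.

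The case $m$ odd is the real obstacle, because then $M_{2m}$ is bipartite: color $i$ by its parity. The neighbors $i\pm 1$ have opposite parity, and so does $i+m$ since $m$ is odd. This choice of $S$ therefore fails, and I would change $S$ in this case. Since $m$ is odd, the element $y = x^2$ has odd order $m$ (note $m \ge 3$). Take $S = \{y^{\pm 1}\}$. Then $\Cay(\Gamma, S)$ is a disjoint union of copies of $\Cay(\langle y\rangle,\{y^{\pm1}\}) \cong C_m$. By Lemma~\ref{lemma cycle ladder}(1), $\chi_c(C_m) = 2 + \frac{2}{m-1}$, as required.

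To summarize, I choose $S = \{x^{\pm1}, x^m\}$ when $m$ is even and $S = \{x^{\pm 2}\}$ when $m$ is odd. In both cases the computation is the Cayley-graph-of-cyclic-subgroup reduction followed by Lemma~\ref{lemma cycle ladder}. The only points to check are that the components are all isomorphic to the stated graph and that $\chi_c$ of a disjoint union of copies equals $\chi_c$ of one copy; both are routine.
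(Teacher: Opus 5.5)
Your proposal is correct and follows essentially the same route as the paper: $S = \{x^{\pm 1}, x^{m}\}$ giving copies of $M_{2m}$ when $m$ is even, and $S = \{x^{\pm 2}\}$ giving copies of $C_m$ when $m$ is odd, each finished by Lemma~\ref{lemma cycle ladder}. Your extra remark that $M_{2m}$ is bipartite for odd $m$ correctly explains why the paper switches the generating set in that case.
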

\begin{proof}
First suppose that $m = 2n$. Set $S = \{ x^{\pm 1}, x^{\pm 2n}\}$. Then $\Cay(\Gamma, S)$ is a disjoint union of the M\"obius ladder $M_{2m}$, and Lemma~\ref{lemma cycle ladder} gives
\[ \chi_c(\Cay(\Gamma, S)) = \chi_c(M_{2m}) = 2 + \frac{2}{m-1}.\]
Next suppose that $m$ is odd. Taking $S = \{ x^{\pm 2}\}$, we see that $\Cay(\Gamma, S)$ is a disjoint union of copies of $C_m$. Hence, by Lemma~\ref{lemma cycle ladder},
\[ \chi_c(\Cay(\Gamma, S)) = \chi_c(C_m) = 2 + \frac{2}{m-1}.\qedhere\]
\end{proof}

\begin{lemma}\label{lemma odd}
Let $\Gamma$ be a group containing an element $x$ of odd order $m$ with $m > 1$. Then there is a symmetric subset $S$ of $\Gamma$ such that
\[ \chi_c(\Cay(\Gamma, S)) = 2 + \frac{2}{m-1}.\]
\end{lemma}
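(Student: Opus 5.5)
The plan is to take $S = \{x^{\pm 1}\}$ and reduce the Cayley graph to disjoint cycles. The cyclic subgroup $\langle x \rangle$ has order $m$, and $m$ is odd with $m>1$, so $m \ge 3$. Two vertices $g,h \in \Gamma$ are adjacent in $\Cay(\Gamma, S)$ exactly when $g^{-1}h = x^{\pm 1}$. Hence every connected component lies in a single left coset $g\langle x\rangle$. On that coset, the map $i \mapsto g x^i$ from $\ZZ/m\ZZ$ is an isomorphism from $C_m$ onto the component. Since $m \ge 3$, we have $x \ne x^{-1}$, so each vertex has exactly two neighbours and there are no loops or multiple edges. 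Therefore $\Cay(\Gamma, S)$ is a disjoint union of copies of $C_m$.

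Next I will record why a disjoint union of copies of a graph $H$ has circular chromatic number $\chi_c(H)$.
\begin{itemize}
\item Any homomorphism $H \to K_{p/q}$ can be applied componentwise, which gives a homomorphism from the union to $K_{p/q}$.
\item Conversely, restricting to one copy shows that the union has circular chromatic number at least $\chi_c(H)$.
\end{itemize}
The paper already uses this implicitly in Lemma~\ref{lemma even}. Applying Lemma~\ref{lemma cycle ladder}(1) with $m$ odd then gives
\[ \chi_c(\Cay(\Gamma,S)) = \chi_c(C_m) = 2 + \frac{2}{m-1}. \]

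I do not expect any real obstacle. The only points needing care are checking that the components really are $m$-cycles, which uses $m \ge 3$ so that $x \neq x^{-1}$, and that $S$ is symmetric, which is clear. If a reader prefers not to rely on Lemma~\ref{lemma cycle ladder}, the value $\chi_c(C_{2\ell+1}) = 2 + 1/\ell$ stated earlier gives the same number with $m = 2\ell+1$.
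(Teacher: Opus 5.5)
Your proposal is correct and is the same as the paper's proof: take $S=\{x^{\pm1}\}$, note that $\Cay(\Gamma,S)$ is a disjoint union of copies of $C_m$, and apply Lemma~\ref{lemma cycle ladder}(1). Your additional checks make explicit what the paper leaves implicit: that $m\ge 3$ forces $x\neq x^{-1}$, that the components are left cosets of $\langle x\rangle$, and that a disjoint union of copies of a graph has the same circular chromatic number as the graph.
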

\begin{proof}
Set $S = \{ x^{\pm 1}\}$. Then $\Cay(\Gamma, S)$ is a disjoint union of copies of $C_m$, and the conclusion follows from Lemma~\ref{lemma cycle ladder}.
\end{proof}

\begin{proof}[Proof of Theorem~\ref{theorem existence}]
Choose $x \in \Gamma$ whose order does not belong to $\{ 1, \cdots, k\} \cup \{ 2,4, \cdots, 2k\}$. If $x$ has infinite order, apply Lemma~\ref{lemma infinite}. If its order is even and greater than $2k$, apply Lemma~\ref{lemma even}. If its order is odd and greater than $k$, apply Lemma~\ref{lemma odd}. These cases establish the result.
\end{proof}

Krebs and Sankar showed that if $\Gamma$ is a finitely generated abelian group with $\exp(\Gamma) \notin \{1,2,4\}$, then there exists a generating set $S$ of $\Gamma$ such that $\Cay(\Gamma,S)$ is $3$-chromatic (see \cite[Theorem~5.1]{KS}). Similarly, if $\Gamma$ is a finitely generated abelian group, then the set $S$ in Theorem~\ref{theorem existence} can be chosen to be a generating set of $\Gamma$.

\begin{theorem} \label{theorem fg}
Let $k \ge 2$ be an integer, and let $\Gamma$ be a finitely
generated abelian group such that
\[
\exp(\Gamma) \notin
\{1,2,\ldots,k\} \cup \{2,4,\ldots,2k\}.
\]
Then there exists a symmetric generating set $S$ of $\Gamma$
such that
\[
2 < \chi_c(\Cay(\Gamma,S)) \le 2+\frac{2}{k}.
\]
\end{theorem}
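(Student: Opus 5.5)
Theorem~\ref{theorem existence} already supplies a symmetric set $S_0 \subseteq \langle x \rangle$, for a suitable element $x \in \Gamma$, with $2 < \chi_c(\Cay(\Gamma, S_0)) \le 2 + \frac{2}{k}$. The plan is to enlarge $S_0$ to a generating set without raising $\chi_c$ and while keeping the graph non-bipartite. We first choose a decomposition of $\Gamma$ adapted to $x$. By the structure theorem, write $\Gamma = \ZZ^r \oplus \bigoplus_j \ZZ/d_j\ZZ$ in primary form. The hypothesis on $\exp(\Gamma)$ then gives one cyclic summand $\langle x\rangle$ such that either $x$ has infinite order, or its order is odd and greater than $k$, or its order is even and greater than $2k$. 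In each case primary decomposition lets us take a direct summand: a $\ZZ$ summand, or a $\ZZ/p^a$ with $p$ odd and $p^a>k$, or a $\ZZ/2^a$ with $2^a>2k$. If no single primary cyclic summand is large, combine an odd part and a $2$-part into one cyclic summand $\ZZ/n$ with $n$ the order of the chosen element. This is a direct summand because it is a sum of primary summands. So write $\Gamma = \langle x\rangle \oplus \Gamma'$, where $x$ has order $n \in \{\infty\} \cup \{\text{odd} > k\} \cup \{\text{even} > 2k\}$.

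Next we handle the cyclic summand. Let $H=\Cay(\langle x\rangle, S_0)$ with $S_0$ as in Lemmas~\ref{lemma infinite}, \ref{lemma even}, \ref{lemma odd}; we may choose the parameters so that $\chi_c(H)\le 2+\frac2k$. In the infinite case take $k'$ even with $2/(k'-1)\le 2/k$. In the even case $n=2m$ with $m>k$, so $2+\frac{2}{m-1}\le 2+\frac2k$. In the odd case $m=n>k$, so the same bound holds. We must check that $S_0$ generates $\langle x\rangle$. In Lemmas~\ref{lemma infinite} and \ref{lemma odd}, and in the even case with $m$ even, $S_0$ contains $x$. In the even case with $m$ odd, $S_0=\{x^{\pm2}\}$ does not generate. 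There we instead use $\ZZ/2m \cong \ZZ/m \oplus \ZZ/2$ with $m$ odd and $m>k$. Take $S_0 = \{(\pm1,0)\}$, giving a copy of $C_m$ on each coset, and put the $\ZZ/2$ generator into the complement part treated below.

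Finally we extend to a generating set. Let $T$ be a finite generating set of $\Gamma'$ (including any leftover $\ZZ/2$ factor) and set $S = S_0 \cup \{ x^{\epsilon} t^{\pm1} \mid t\in T\}$, where $\epsilon = 0$ or $1$ is to be chosen. Let $c\colon \langle x\rangle \to K_{p/q}$ be a homomorphism realizing $\chi_c(H)\le p/q\le 2+\frac2k$, and define $f(x^i g) = c(x^i)$ for $g\in\Gamma'$. With $\epsilon=0$, an edge $x^ig \sim x^i g t$ would map to a loop, which fails. So we need $c(x^i)$ adjacent to $c(x^{i+\epsilon})$, which means $x^{\epsilon}$ must lie in $S_0$; take $\epsilon=1$, i.e.\ add the elements $x t^{\pm1}$. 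Then each new edge $x^ig \sim x^{i+1}gt$ maps to the edge $c(x^i)c(x^{i+1})$ of $K_{p/q}$, since $x\in S_0$ in all the cases above. Thus $f$ is a homomorphism and $\chi_c(\Cay(\Gamma,S)) \le \chi_c(H) \le 2+\frac2k$. The set $S$ is symmetric, and it generates $\Gamma$ because $x\in S$ and $t=x^{-1}(xt)$. The graph contains $H$ as a subgraph, which is non-bipartite, so $\chi_c>2$. The main obstacle is the bookkeeping in the decomposition step, especially ensuring $x\in S_0$ in the even-order case with $m$ odd; the $\ZZ/m\oplus\ZZ/2$ splitting above resolves it, because $m$ odd and greater than $k$ makes $C_m$ suffice.
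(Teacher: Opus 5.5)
Your proof follows the paper's. You split off a cyclic direct summand $\langle x\rangle$, either a $\ZZ$ factor or one of order $\exp(\Gamma)$. On it you put a connection set containing $x^{\pm1}$ whose Cayley graph is $G_{k'}$, $M_{2m}$ or $C_m$. You then add each complement generator multiplied by $x$, and use the projection onto $\langle x\rangle$ as the homomorphism. The only difference is the case $\exp(\Gamma)=2q'$ with $q'$ odd. The paper keeps the summand $\ZZ/2q'\ZZ$ with $\{x_1^{\pm2},x_1^{\pm q'}\}$, maps onto $C_{q'}\square K_2$, and uses $\chi_c(C_{q'}\square K_2)=\chi_c(C_{q'})$. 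You move the $\ZZ/2\ZZ$ factor into the complement and map straight to $C_{q'}$, which is slightly cleaner.

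Two small repairs are needed.

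First, the set $\{xt^{\pm1}\mid t\in T\}$ is not symmetric, since the inverse of $xt$ is $x^{-1}t^{-1}$. Use $\{(xt)^{\pm1}\mid t\in T\}$, as the paper does with $(x_1x_i)^{\pm1}$. After that your edge check and your generation argument go through unchanged.

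Second, in the decomposition step do not take $n$ to be ``the order of the chosen element'' for an arbitrary element. Not every admissible element order is the order of a cyclic direct summand. For example, with $k=8$, $\ZZ/36\ZZ$ has an element of order $18$ but no direct summand of order $18$. Also, the odd part may need several odd primes merged, as in $\ZZ/3\ZZ\oplus\ZZ/5\ZZ\oplus\ZZ/7\ZZ$ with $k=10$. State explicitly that, for every prime, you take a primary cyclic summand of maximal order and combine them. This gives a cyclic summand of order exactly $\exp(\Gamma)$, which is admissible by hypothesis.
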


\begin{proof}
First, suppose that $\Gamma$ is infinite.
We may assume that
\[
\Gamma = \Gamma_1 \times \cdots \times \Gamma_n,
\]
where each $\Gamma_i$ is cyclic and $\Gamma_1 \cong \ZZ$.
Let $x_i$ be a generator of $\Gamma_i$ for each $i$,
and choose an even integer $m$ such that $m-1 \ge k$.
Define a symmetric generating set $S$ of $\Gamma$ by
\[
S =
\{x_1^{\pm1},x_1^{\pm m}\}
\cup
\{(x_1x_i)^{\pm1} \mid i=2,\ldots,n\}.
\]
Since $\Cay(\Gamma,S)$ contains an odd cycle of length
$m+1$, it is non-bipartite.
Moreover, the composition of the projection
$\Gamma \to \Gamma_1 \cong \ZZ$
and the quotient map
$\ZZ \to \ZZ/2m\ZZ$
induces a graph homomorphism
from $\Cay(\Gamma,S)$ to $M_{2m}$.
Consequently,
\[
\chi_c(\Cay(\Gamma,S))
\le \chi_c(M_{2m})
= \frac{2m}{m-1}
= 2+\frac{2}{m-1}
\le 2+\frac{2}{k}.
\]

Next, suppose that $\Gamma$ is finite.
Let $p_1<\cdots<p_r$ be the distinct prime divisors
of $|\Gamma|$, and let $q_i$ be the maximum order
of an element of the $p_i$-primary component of $\Gamma$.
Set
\[
\Gamma'_i=\ZZ/q_i\ZZ.
\]
Then we can write
\[
\Gamma \cong
\Gamma'_1\times\cdots\times\Gamma'_r
\times\Gamma_2\times\cdots\times\Gamma_n,
\]
where $\Gamma_2,\ldots,\Gamma_n$ are cyclic groups.
Put $q=\exp(\Gamma)$. Then $q=q_1\cdots q_r$.
Since the orders $q_1,\ldots,q_r$ are pairwise coprime,
the group
\[
\Gamma_1=\Gamma'_1\times\cdots\times\Gamma'_r
\]
is cyclic and isomorphic to $\ZZ/q\ZZ$.
Thus, we identify $\Gamma$ with
$\Gamma_1\times\cdots\times\Gamma_n$.
For each $i$, let $x_i$ be a generator of $\Gamma_i$.

Suppose first that $q=\exp(\Gamma)$ is odd.
Then $q>k$.
Define a symmetric generating set $S$ of $\Gamma$ by
\[
S=
\{x_1^{\pm1}\}
\cup
\{(x_1x_i)^{\pm1}\mid i=2,\ldots,n\}.
\]
Since $\Cay(\Gamma,S)$ contains an odd cycle of length $q$,
it is non-bipartite.
Moreover, the projection
$\Gamma\to\Gamma_1\cong\ZZ/q\ZZ$
induces a graph homomorphism
from $\Cay(\Gamma,S)$ to $C_q$.
Therefore,
\[
\chi_c(\Cay(\Gamma,S))
\le \chi_c(C_q)
=2+\frac{2}{q-1}
\le 2+\frac{2}{k}.
\]

Finally, suppose that $q=\exp(\Gamma)$ is even.
Write $q=2q'$. Then $q'>k$.

If $q'$ is even, define a symmetric generating set
$S$ of $\Gamma$ by
\[
S=
\{x_1^{\pm1},x_1^{\pm q'}\}
\cup
\{(x_1x_i)^{\pm1}\mid i=2,\ldots,n\}.
\]
Since $\Cay(\Gamma,S)$ contains an odd cycle of length
$q'+1$, it is non-bipartite.
Moreover, the projection
$\Gamma\to\Gamma_1\cong\ZZ/2q'\ZZ$
induces a graph homomorphism
from $\Cay(\Gamma,S)$ to $M_{2q'}$.
Hence,
\[
\chi_c(\Cay(\Gamma,S))
\le \chi_c(M_{2q'})
=2+\frac{2}{q'-1}
\le 2+\frac{2}{k}.
\]

If $q'$ is odd, define a symmetric generating set
$S$ of $\Gamma$ by
\[
S=
\{x_1^{\pm2},x_1^{\pm q'}\}
\cup
\{(x_1^2 x_i)^{\pm1}\mid i=2,\ldots,n\}.
\]
Since $\Cay(\Gamma,S)$ contains an odd cycle of length
$q'$, it is non-bipartite.
Moreover, the projection $\Gamma\to\Gamma_1$
induces a graph homomorphism
\[
\Cay(\Gamma,S)
\longrightarrow
\Cay(\Gamma_1,\{x_1^{\pm2},x_1^{\pm q'}\})
\cong C_{q'}\square K_2.
\]
Since $\chi_c(C_{q'}\square K_2)=\chi_c(C_{q'})$, we obtain
\[
\chi_c(\Cay(\Gamma,S))
\le \chi_c(C_{q'})
=2+\frac{2}{q'-1}
\le 2+\frac{2}{k}.
\]
This completes the proof.
\end{proof}

\section{Lower bound for the circular chromatic numbers}\label{section fundamental groups}

We now prove the lower bound needed to complete the proof of Theorem~\ref{theorem main}. We first give a presentation of the $k$-fundamental group of a Cayley graph of an abelian group, extending the presentation of Krebs and Sankar \cite{KS}.

Let $\Gamma$ be an abelian group, written multiplicatively with identity $1_\Gamma$, and let $S$ be a symmetric subset of $\Gamma$. Let $\ZZ[S]$ be the free abelian group generated by $S$, and write $e_s$ for the generator corresponding to $s \in S$. Define $\varphi \colon \ZZ[S] \to \Gamma$ by
\[ \varphi(a_1 e_{s_1} + \cdots + a_m e_{s_m}) = s_1^{a_1} \cdots s_m^{a_m}.\]
Set $R[S] = \Ker(\varphi)$. For $k \ge 1$, let $H_k[S]$ be the subgroup of $R[S]$ generated by the following elements:
\begin{enumerate}[(i)]
\item $e_s + e_{s^{-1}}$ for every $s \in S$;
\item $(e_{s_1} + \cdots + e_{s_m}) - (e_{s'_1} + \cdots + e_{s'_m})$ for all $s_1, \cdots, s_m, s'_1, \cdots, s'_m$ and for $m \le k$ such that $s_1 \cdots s_m = s'_1 \cdots s'_m$.
\end{enumerate}

Krebs and Sankar \cite{KS} proved that $\pi_1^2(\Cay(\Gamma, S), 1_\Gamma) \cong R[S]/H_2[S]$. We generalize their result as follows.

\begin{theorem}\label{theorem presentation}
Let $k \ge 2$ be an integer, let $\Gamma$ be an abelian group, and let $S$ be a symmetric subset of $\Gamma$. Then $\pi_1^k(\Cay(\Gamma, S), 1_\Gamma) \cong R[S] / H_k[S]$. In particular, $\pi_1^k(\Cay(\Gamma, S), 1_\Gamma)$ is abelian.
\end{theorem}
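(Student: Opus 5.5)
The plan is to define a map from closed walks to $R[S]$, show it descends to an isomorphism onto the quotient, and check the two moves against the two types of generators. A closed walk $\gamma$ of length $n$ at $1_\Gamma$ in $\Cay(\Gamma,S)$ is determined by its sequence of steps $s_i = \gamma(i-1)^{-1}\gamma(i) \in S$ for $i=1,\dots,n$, subject to $s_1\cdots s_n = 1_\Gamma$. Set
\[ \Phi(\gamma) = e_{s_1} + \cdots + e_{s_n} \in \ZZ[S]. \]
Since $\varphi(\Phi(\gamma)) = \gamma(n) = 1_\Gamma$, we have $\Phi(\gamma) \in R[S]$. The map $\Phi$ sends concatenation to addition. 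The strategy is to show that $\Phi$ induces a well-defined surjective homomorphism $\pi_1^k(\Cay(\Gamma,S),1_\Gamma) \to R[S]/H_k[S]$, and then to construct an inverse. Abelianness then follows at once.

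\textbf{Well-definedness.} I will check the two elementary moves, using Remark~\ref{remark consecutive} to replace (B)$_k$ by (B)$'_k$.
\begin{itemize}
\item[(A)] A move of type (A) inserts a backtrack $x \to xs \to x$. This changes $\Phi$ by $e_s + e_{s^{-1}}$, which is a generator of type (i).
\item[(B)$'_k$] Suppose $\gamma'$ differs from $\gamma$ only at the positions $i+1,\dots,i+m$ with $m<k$. Then only the $m+1 \le k$ consecutive steps $s_{i+1},\dots,s_{i+m+1}$ change. Their products agree, because both equal $\gamma(i)^{-1}\gamma(i+m+1)$. So the difference of the $\Phi$ values is a generator of type (ii). If the window runs past the end of the walk, it is truncated, and the same argument applies with fewer steps.
\end{itemize}

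\textbf{Surjectivity.} Any element of $R[S]$ can be written as $\sum a_s e_s$ with $a_s \ge 0$, because $-e_s \equiv e_{s^{-1}}$ modulo the type (i) generators. Any ordering of the resulting multiset of steps has product $1_\Gamma$ since $\Gamma$ is abelian, and so it gives a closed walk.

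\textbf{Inverse map.} The main step is to show that walks with the same $\Phi$ modulo $H_k[S]$ are $k$-homotopic. I will do this in three parts.
\begin{enumerate}[(a)]
\item Swapping two adjacent steps $s_i, s_{i+1}$ is a (B)$_2$ move, since only the vertex $\gamma(i)$ changes and $s_i s_{i+1} = s_{i+1}s_i$. Hence $k \ge 2$ allows arbitrary reordering of steps. So the $k$-homotopy class depends only on the multiset of steps, that is, on $\Phi(\gamma) \in \ZZ_{\ge 0}[S]$.
\item Using (A) we may insert or delete pairs $s, s^{-1}$ anywhere. Together with (a), this shows the class depends only on $\Phi(\gamma)$ modulo the subgroup generated by the type (i) elements.
\item Given a type (ii) relation, reorder the walk so that $s_1,\dots,s_m$ appear consecutively. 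Replacing them by $s'_1,\dots,s'_m$ changes only the $m-1<k$ interior vertices, so it is a (B)$_k$ move.
\end{enumerate}

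\textbf{Main obstacle.} The subtle point is that two walks may have $\Phi(\gamma)-\Phi(\gamma')$ equal to an arbitrary integer combination of generators, including negative coefficients, while the moves only act on actual walks. To handle this, I will define $\Psi \colon R[S] \to \pi_1^k$ by sending a nonnegative representative to the class of any walk with that step multiset. By (a), (b) and the surjectivity argument, this depends only on the class of the representative modulo the type (i) generators. $\Psi$ is additive, since concatenation corresponds to adding multisets. It kills each type (ii) generator $g$: for a walk $w$ containing $s_1,\dots,s_m$ consecutively, part (c) gives $\Psi(\Phi(w)) = \Psi(\Phi(w)-g)$, so $\Psi(g)=0$ by additivity. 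Hence $\Psi$ factors through $R[S]/H_k[S]$, and $\Psi\circ\Phi = \mathrm{id}$ and $\Phi\circ\Psi=\mathrm{id}$ are immediate. Finally, the group is abelian because it is isomorphic to a quotient of the abelian group $R[S]$; this is also directly visible from (a).
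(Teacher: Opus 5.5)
Your proof is correct and follows essentially the paper's argument. The paper also defines $\Psi\colon R[S]\to\pi_1^k(\Cay(\Gamma,S),1_\Gamma)$ by replacing negative coefficients with inverse steps, gets well-definedness from cancellation and adjacent transpositions, and checks that $\Psi$ kills the generators of $H_k[S]$; it then proves $\Ker(\Psi)\subset H_k[S]$ by showing that each move (A) or (B)$'_k$ changes $e_{s_1}+\cdots+e_{s_\ell}$ by an element of $H_k[S]$, which is exactly your well-definedness check for $\Phi$, so the only difference is that you package this as an explicit two-sided inverse rather than as a kernel computation for a surjection.
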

\begin{proof}
Let $M[S]$ be the set of finite sequences of elements of $S$, with the monoid operation given by concatenation. We describe closed walks in $\Cay(\Gamma,S)$ based at $1_\Gamma$ by their successive steps. Let $s_1, \cdots, s_\ell \in S$ satisfy $s_1 \cdots s_\ell = 1_\Gamma$ in $\Gamma$, and, for $i = 0,1, \cdots, \ell$, set
\[ v_i = s_1 \cdots s_i. \]
Then $(v_0, \cdots, v_\ell)$ is a closed walk in $\Cay(\Gamma,S)$ based at $1_\Gamma$. We denote this walk by $(s_1, \cdots, s_\ell)$. In this way, we identify $\Omega(\Cay(\Gamma, S), 1_\Gamma)$ with the subset
\[ \hatR[S] = \{ (s_1, \cdots, s_\ell) \in M[S] \mid s_1 \cdots s_\ell = 1_\Gamma \}\]
of $M[S]$. We write $[s_1, \cdots, s_\ell]$ for the $k$-homotopy class of the corresponding closed walk $(s_1, \cdots, s_\ell) \in \hatR[S]$. The definition of $k$-homotopy gives the following moves:
\begin{enumerate}[(a)]
\item If $s_{i+1} = s_i^{-1}$, deleting the indicated pair of steps does not change the class:
\[ [s_1, \cdots, s_\ell]_k = [s_1, \cdots, s_{i-1}, s_{i+2}, \cdots, s_{\ell}]_k. \]
\item If $m \le k$ and $s_{i+1} \cdots s_{i+m} = s'_{i+1} \cdots s'_{i+m}$, then
\[ [s_1, \cdots, s_\ell]_k = [s_1, \cdots,s_i, s'_{i+1}, \cdots, s'_{i+m}, s_{i+m+1}, \cdots, s_\ell]_k.\]
\end{enumerate}
Since $k \ge 2$ and $\Gamma$ is abelian, move (b) shows that interchanging two consecutive steps does not change the $k$-homotopy class:
\begin{equation}\label{equation transposition}
[s_1, \cdots, s_m]_k = [s_1, \cdots, s_{i-1}, s_{i+1}, s_i, s_{i+2}, \cdots, s_m]_k.
\end{equation}
Define $\hPsi \colon \hatR[S] \to \pi_1^k(\Cay(\Gamma, S), 1_\Gamma)$ to be the composite
\[ \hatR[S] \xrightarrow{\cong} \Omega(\Cay(\Gamma, S), 1_\Gamma) \to \pi_1^k(\Cay(\Gamma, S), 1_\Gamma).\]
By the definition of the product, $\hPsi$ is a monoid homomorphism.

Let $q \colon \hatR[S] \to R[S]$ be the natural map given by
\[ q(s_1, \cdots, s_m) = e_{s_1} + \cdots + e_{s_m}. \]
We construct a group homomorphism $\Psi \colon R[S] \to \pi_1^k(\Cay(\Gamma, S), 1_\Gamma)$ such that $\Psi \circ q = \hPsi$.

For $x = a_1 e_{s_1} + \cdots + a_m e_{s_m} \in R[S]$, with each of $a_1, \cdots, a_m$ nonzero, define $\Psi \colon R[S] \to \pi_1^k(\Cay(\Gamma, S), 1_\Gamma)$ by
\[
\begin{split}
\Psi(x) = \big[\,&\underbrace{s_1^{\sgn(a_1)}, \cdots, s_1^{\sgn(a_1)}}_{|a_1|\text{ times}},
\underbrace{s_2^{\sgn(a_2)}, \cdots, s_2^{\sgn(a_2)}}_{|a_2|\text{ times}},\\
&\cdots, \underbrace{s_m^{\sgn(a_m)}, \cdots, s_m^{\sgn(a_m)}}_{|a_m|\text{ times}} \big].
\end{split}
\]
The cancellation move and \eqref{equation transposition} show that $\Psi(x)$ is independent of the expression $x = a_1 e_{s_1} + \cdots + a_m e_{s_m}$. This independence also shows that $\Psi$ is a group homomorphism. The identity $\Psi \circ q = \hPsi$ follows directly from the definition. Since $\hPsi$ is a composite of surjections, it is surjective, and hence so is $\Psi$.

It remains to prove that $\Ker(\Psi) = H_k[S]$. We first show that $H_k[S] \subset \Ker(\Psi)$. For each $s \in S$, we have
\[ \Psi(e_s + e_{s^{-1}}) = [s, s^{-1}]_k = [\ ]_k.\]
Thus every generator of type (i) in the definition of $H_k[S]$ lies in $\Ker(\Psi)$. Next, let $s_1, \cdots, s_m, t_1, \cdots, t_m \in S$ with $m \le k$ satisfy $s_1 \cdots s_m = t_1 \cdots t_m$. Then
\begin{align*}
\Psi\big( (e_{s_1} + \cdots + e_{s_m}) - (e_{t_1} + \cdots + e_{t_m})\big)
&= [s_1, \cdots, s_m, t_1^{-1}, \cdots, t_m^{-1}]_k \\
&= [t_m, \cdots, t_1, t_1^{-1}, \cdots, t_m^{-1}]_k\\
&= [\ ]_k.
\end{align*}
Hence every generator of type (ii) also lies in $\Ker(\Psi)$, proving $H_k[S] \subset \Ker(\Psi)$.

For the reverse inclusion, take $x \in \Ker(\Psi)$ and choose $s_1, \cdots, s_\ell, t_1, \cdots, t_m \in S$ such that
\[ x = (e_{s_1} + \cdots + e_{s_\ell}) - (e_{t_1} + \cdots + e_{t_m}). \]
Then we have
\[ x = (e_{s_1} + \cdots + e_{s_\ell} + e_{t_1^{-1}} + \cdots + e_{t_m^{-1}}) - (e_{t_1} + e_{t_1^{-1}} + \cdots + e_{t_m} + e_{t_m^{-1}}).\]
Since $e_{t_i} + e_{t_i^{-1}} \in H_k[S]$, we can assume that $x$ is written as
\[ x = e_{s_1} + \cdots + e_{s_\ell},\]
and show $x \in H_k[S]$. Set $\hx = (s_1, \cdots, s_\ell) \in \hatR[S] \cong \Omega(\Cay(\Gamma, S), 1_\Gamma)$, so that $q(\hx) = x$. Since $x \in \Ker(\Psi)$, there is a sequence in $\hatR[S] \cong \Omega(\Cay(\Gamma, S), 1_\Gamma)$,
\[ \hx = \hx_0, \cdots, \hx_n = (),\]
such that, for every $i \in \{ 1, \cdots, n\}$, either $(\hx_{i-1}, \hx_i)$ or $(\hx_i, \hx_{i-1})$ satisfies (A) or (B)$'_k$ (see Remark~\ref{remark consecutive}). Since $q(\hx_n)=0_{R[S]}$, it suffices to show that $q(\hx_{i-1}) - q(\hx_i) \in H_k[S]$ for every $i \in \{ 1, \cdots, n\}$; this will give $x = q(\hx) = q(\hx_0) \in H_k[S]$.

If the move is of type (A), then there is an $s \in S$ such that $q(\hx_{i-1}) - q(\hx_i) = \pm(e_s + e_{s^{-1}})$, which belongs to $H_k[S]$. If the move is of type (B)$'_k$, there are $m \le k$ and $t_1, \cdots, t_m, t'_1, \cdots, t'_m \in S$ with $t_1 \cdots t_m = t'_1 \cdots t'_m$ in $\Gamma$ such that
\[ q(\hx_i) - q(\hx_{i-1}) = (e_{t'_1} + \cdots + e_{t'_m}) - (e_{t_1} + \cdots + e_{t_m}). \]
The right-hand side belongs to $H_k[S]$. This completes the proof of $\Ker(\Psi) = H_k[S]$.
\end{proof}

We now apply the presentation to obtain the other direction of Theorem~\ref{theorem main}.

\begin{theorem}\label{theorem other direction}
Let $k \ge 2$ be an integer, and let $\Gamma$ be an abelian group such that
\[ \exp(\Gamma) \in \{ 1, \cdots, k\} \cup \{ 2,4, \cdots, 2k\}.\]
Then, for every symmetric subset $S$ of $\Gamma$ such that $\Cay(\Gamma, S)$ is non-bipartite, we have
\[ \chi_c(\Cay(\Gamma, S)) \ge 2 + \frac{2}{k-1}.\]
\end{theorem}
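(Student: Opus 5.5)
We reduce to Theorem~\ref{theorem circular} applied to the connected component of $1_\Gamma$ in $G = \Cay(\Gamma,S)$. Since $G$ is vertex-transitive, all components are isomorphic to the component $G_0$ of $1_\Gamma$, which is $\Cay(\langle S\rangle, S)$. Hence $\chi_c(G)=\chi_c(G_0)$, and $G_0$ is non-bipartite. Moreover, $\langle S\rangle$ is abelian with exponent dividing $\exp(\Gamma)$. Every divisor of an element of $\{1,\dots,k\}\cup\{2,4,\dots,2k\}$ again lies in this set: divisors of $n\le k$ are $\le k$, and a divisor of $2j$ with $j\le k$ is either $\le k$ or even of the form $2j'$ with $j'\le k$. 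So we may assume $S$ generates $\Gamma$ and $G$ is connected. By Theorem~\ref{theorem presentation}, $\pi_1^k(G,1_\Gamma)\cong R[S]/H_k[S]$ is abelian. It therefore suffices to find an odd closed walk $\gamma$ with $[\gamma]_k$ of finite order in $R[S]/H_k[S]$.

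Non-bipartiteness gives an odd closed walk $(s_1,\dots,s_\ell)$ with $\ell$ odd, and its class is $x=e_{s_1}+\cdots+e_{s_\ell}$. Set $n=\exp(\Gamma)$. The key observation is that for each $s\in S$ with $s^n=1$, the element $n e_s$ lies in $R[S]$, and we try to show it lies in $H_k[S]$ up to the relations available. We split into the two cases of the hypothesis.

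\textbf{Case $n\le k$.} Since $s^n=1=(s s^{-1})^{\cdots}$, we compare the word $(s,\dots,s)$ of length $n$ with the word $(s,s^{-1},s,s^{-1},\dots)$ of length $n$ when $n$ is even, or more cleanly with $(t,t^{-1},\dots)$. In general we use relation (ii) for $m=n\le k$, taking $s_1\cdots s_n=s^n=1$ and $s'_1\cdots s'_n$ a product of $n$ letters equal to $1$. When $n$ is odd there is no such alternating word, so instead we use $x$ itself. We write $nx=\sum_i n e_{s_i}$ and reduce each $n e_{s_i}$ modulo type~(i) and~(ii) relations. Alternatively, we show $2ne_s\in H_k$: relation (ii) with $m=n$ compares $(s,\dots,s)$ with $(s^{-1},\dots,s^{-1})$, since both products equal $1$. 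This gives $ne_s-ne_{s^{-1}}\in H_k$. Combined with $ne_s+ne_{s^{-1}}\in H_k$ from (i), we get $2ne_s\in H_k$. Hence $2n x\in H_k[S]$, so $[\gamma]_k$ is torsion.

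\textbf{Case $n=2j$ with $j\le k$.} For $s\in S$ we have $s^j=s^{-j}$, since $s^{2j}=1$. Relation (ii) with $m=j\le k$ gives $je_s-je_{s^{-1}}\in H_k$, and (i) gives $je_s+je_{s^{-1}}\in H_k$. Thus $2je_s\in H_k$, and again $2jx\in H_k[S]$. Theorem~\ref{theorem circular} then yields $\chi_c\ge 2+\frac{2}{k-1}$ in both cases.

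In fact the first case is subsumed by the same trick, since $s^n=1$ implies $s^{\lceil n/2\rceil}=s^{-\lfloor n/2\rfloor}$. Still, words of different lengths are not allowed in (ii), which is why we used $s^n=s^{-n}$ with length $n\le k$. The main point to verify carefully is therefore that relation (ii) is applied only with equal lengths $m\le k$, and the reduction to the connected case. The torsion argument itself is short once Theorem~\ref{theorem presentation} is available.
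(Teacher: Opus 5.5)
Your argument is correct and is essentially the paper's proof, carried out inside the presentation $R[S]/H_k[S]$. Your identity $2ne_s = (ne_s - ne_{s^{-1}}) + n(e_s + e_{s^{-1}}) \in H_k[S]$ is the algebraic form of the paper's homotopy $(s_i,\dots,s_i) \simeq_k (s_i^{-1},\dots,s_i^{-1},s_i,\dots,s_i) \simeq_k *$ on $2n$ steps, with $n = \exp(\Gamma)$ or $\exp(\Gamma)/2$. Your reduction to $\langle S\rangle$ just makes precise the paper's passage to the component $G_0$.

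Delete the exploratory remarks suggesting that $ne_s$ itself lies in $H_k[S]$, because this can fail. For $\Gamma=\ZZ/3\ZZ$, $S=\{x^{\pm 1}\}$, $k=3$, one gets $R[S]/H_k[S]\cong \ZZ/2\ZZ$ generated by $3e_x$. Only $2ne_s \in H_k[S]$ is needed, and that is what you actually prove.
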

\begin{proof}
Let $G_0$ be the connected component of $\Cay(\Gamma, S)$ containing $1_\Gamma$. Since $\Cay(\Gamma, S)$ is a disjoint union of copies of $G_0$, the graph $G_0$ is non-bipartite. As $G_0$ is connected and non-bipartite, it contains a closed walk
\[ \gamma = (s_1, \cdots, s_m) \]
based at $1_\Gamma$ whose length $m$ is odd. Define $n = \exp(\Gamma) / 2$ if $\exp(\Gamma) \in \{ 2,4, \cdots, 2k\}$, and define $n = \exp(\Gamma)$ if $\exp(\Gamma) \not\in \{ 2,4, \cdots, 2k\}$. Let $\Phi \colon \pi_1^k(\Cay(\Gamma, S), 1_\Gamma) \to R[S] / H_k[S]$ be the isomorphism from Theorem~\ref{theorem presentation}. Then
\[ \Phi([\gamma]_k^{2n}) = 2n e_{s_1} + \cdots + 2n e_{s_m}. \]
For each $i \in \{ 1, \cdots, m\}$, let $\gamma_i$ be the closed walk in $\Cay(\Gamma, S)$ given by
\[ \gamma_i = (\underbrace{s_i, \cdots, s_i}_{2n\text{ times}}).\]
We have
\[ \Phi([\gamma_1]_k \cdots [\gamma_m]_k) = 2n e_{s_1} + \cdots + 2n e_{s_m}.\]
Since $\Phi$ is an isomorphism, it follows that $[\gamma_1]_k \cdots [\gamma_m]_k = [\gamma]_k^{2n}$. Moreover, since $n \le k$, we have
\[ \gamma_i = (\underbrace{s_i, \cdots, s_i}_{2n\text{ times}}) \simeq_k (\underbrace{s_i^{-1}, \cdots, s_i^{-1}}_{n{ \text{ times}}}, \underbrace{s_i, \cdots, s_i}_{n{ \text{ times}}}) \simeq_k *_{1_\Gamma}.\]
Thus $[\gamma]_k$ is a torsion element of $\pi_1^k(\Cay(\Gamma, S), 1_\Gamma)$ represented by a closed walk of odd length. The conclusion follows from Theorem~\ref{theorem circular}.
\end{proof}

Now we complete the proof of our main theorem (Theorem~\ref{theorem main}).

\begin{proof}[Proof of Theorem~\ref{theorem main}]
The implication $(3) \Rightarrow (1)$ follows from Theorem~\ref{theorem existence}. The implication $(2) \Rightarrow (3)$ is clear. The implication $(1) \Rightarrow (2)$ follows from Theorem~\ref{theorem other direction}.

Suppose that $\Gamma$ is finitely generated. Then, the implication $(4) \Rightarrow (1)$ follows from Theorem~\ref{theorem fg} and the implication $(3) \Rightarrow (4)$ is clear. This completes the proof.
\end{proof}

\appendix

\section{A topological proof of Theorem~\ref{theorem neighborhood}}\label{appendix neighborhood}

We give an alternative proof of Theorem~\ref{theorem neighborhood} using standard properties of the neighborhood and box complexes. For definitions and basic facts of the neighborhood complex $\NNN(G)$ and the box complex $\BBB(G)$, we refer the reader to \cite{DM, Matousek} (see also \cite{Kozlov, MW}). We recall only the properties needed here:
\begin{itemize}
\item For a simple graph $G$, the box complex $\BBB(G)$ is a free $\ZZ_2$-simplicial complex.
\item The complexes $\NNN(G)$ and $\BBB(G)$ are homotopy equivalent.
\item If $G$ is connected and non-bipartite, then $\NNN(G)$ is connected, as follows directly from its definition.
\end{itemize}
We regard $S^n$ as a $\ZZ_2$-space under the antipodal action. For a free $\ZZ_2$-space $X$, its \emph{$\ZZ_2$-index} is defined by
\[ \ind_{\ZZ_2}(X) = \inf \{ n \ge -1 \mid \text{There is a continuous $\ZZ_2$-map from $X$ to $S^n$}\},\]
where $S^{-1} = \emptyset$. A $\ZZ_2$-map means an equivariant map with respect to the specified actions.

We use the following standard lower bound for the chromatic number.

\begin{theorem}[{see \cite[Theorem~1]{MZ}}]\label{theorem index}
For every graph $G$,
\[ \chi(G) \ge \ind_{\ZZ_2}(\BBB(G)) + 2. \]
\end{theorem}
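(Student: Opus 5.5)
The plan is the standard functoriality argument. A graph homomorphism induces an equivariant map of box complexes, and $\BBB(K_n)$ admits a $\ZZ_2$-map to $S^{n-2}$. Combining the two gives $\ind_{\ZZ_2}(\BBB(G)) \le n-2$ whenever $G$ is $n$-colorable.

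\emph{Trivial cases.} If $\chi(G)=\infty$ (in particular if $G$ has a loop) there is nothing to prove. So fix $n=\chi(G)<\infty$ and an $n$-coloring $c\colon G\to K_n$.

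\emph{Functoriality.} I will use the model of $\BBB(G)$ from \cite{MZ}:
\begin{itemize}
\item vertices are $V(G)\times\{1,2\}$;
\item a simplex is $A\uplus B$, with $A,B\subset V(G)$ disjoint, every vertex of $A$ adjacent to every vertex of $B$, and, when one side is empty, the other side having a common neighbor.
\end{itemize}
The vertex map $(v,i)\mapsto (c(v),i)$ sends simplices to simplices. Indeed, $c(A)$ and $c(B)$ are completely joined in $K_n$, hence disjoint since $K_n$ has no loops. If $A=\emptyset$ and $w$ is a common neighbor of $B$, then $c(w)$ is a common neighbor of $c(B)$. This map commutes with the swap $i\leftrightarrow 3-i$, so it is a simplicial $\ZZ_2$-map $\BBB(G)\to\BBB(K_n)$. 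Therefore $\ind_{\ZZ_2}(\BBB(G))\le \ind_{\ZZ_2}(\BBB(K_n))$.

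\emph{Map from $\BBB(K_n)$ to the sphere.} Let $e_1,\dots,e_n$ be the standard basis of $\RR^n$. Send $(j,1)\mapsto e_j$ and $(j,2)\mapsto -e_j$, and extend affinely on simplices. A point of the simplex $A\uplus B$ then maps to $\sum_{j\in A} t_j e_j-\sum_{j\in B} s_j e_j$, with nonnegative weights summing to $1$. This map is equivariant for the antipodal action. Compose it with the orthogonal projection onto the hyperplane $W=(1,\dots,1)^\perp\cong\RR^{n-1}$ and then radially normalize, to get a $\ZZ_2$-map into $S^{n-2}$.

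\emph{Main obstacle: the image avoids $0$.} This is the one step needing care. The projected point is $0$ exactly when all $n$ coordinates are equal. Because $A\cap B=\emptyset$ and the weights sum to $1$, all coordinates equal forces one of two cases:
\begin{itemize}
\item $A=[n]$, $B=\emptyset$, and all $t_j=1/n$;
\item the symmetric case $B=[n]$, $A=\emptyset$.
\end{itemize}
Such a simplex would require a common neighbor of all of $[n]$ in $K_n$. No such vertex exists, since $K_n$ has no loops. So these simplices are absent and the projection never vanishes.

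\emph{Conclusion.} The composite gives a continuous $\ZZ_2$-map $\BBB(G)\to S^{n-2}$. Hence $\ind_{\ZZ_2}(\BBB(G))\le n-2=\chi(G)-2$, which is the claim.
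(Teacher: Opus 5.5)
Your argument is correct. It is the standard proof of the cited result \cite[Theorem~1]{MZ}; the paper itself gives no proof and only cites it. An $n$-coloring induces a $\ZZ_2$-map $\BBB(G)\to\BBB(K_n)$, and $\BBB(K_n)$ is the boundary of the $n$-dimensional cross-polytope with the two antipodal facets $[n]\uplus\emptyset$ and $\emptyset\uplus[n]$ removed, which maps equivariantly to $S^{n-2}$ by your projection along $(1,\dots,1)$. Your nonvanishing check, using $A\cap B=\emptyset$ and the absence of a common neighbor of $[n]$ in $K_n$, is exactly where those two facets are excluded.
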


We also use the classical odd-degree theorem for equivariant self-maps of spheres.

\begin{theorem}[{see \cite[Proposition~2B.6]{Hatcher}}]\label{theorem Borsuk}
Let $f \colon S^n \to S^n$ be a continuous $\ZZ_2$-map. Then $f$ has odd degree. In particular, the induced homomorphism $f_* \colon H_n(S^n; \ZZ) \to H_n(S^n; \ZZ)$ is nontrivial.
\end{theorem}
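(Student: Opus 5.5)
The statement is the classical fact that an antipodal (i.e.\ $\ZZ_2$-equivariant) map $f \colon S^n \to S^n$ has odd degree. I will prove it by induction on $n$, working with mod $2$ homology and the transfer exact sequence of the double cover $S^n \to \RR P^n$. Since $f(-x) = -f(x)$, $f$ descends to a map $\bar f \colon \RR P^n \to \RR P^n$. The whole proof then reduces to showing that $\bar f$ induces an isomorphism on $H_i(-;\ZZ/2)$ for all $i \le n$. The base case $n = 0$ is trivial: $f$ swaps the two points compatibly with the antipodal map, so $f$ is a bijection and has degree $\pm 1$.

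\textbf{Step 1: $\bar f$ is an isomorphism in degree $1$.} For $n \ge 1$, take a path $\alpha$ in $S^n$ from $x$ to $-x$. It projects to a loop in $\RR P^n$ that is nontrivial in $H_1(\RR P^n;\ZZ/2)$. The image $f\circ\alpha$ is again a path from $f(x)$ to $-f(x)$, so it also projects to a nontrivial loop. Hence $\bar f_*$ is an isomorphism on $H_1(-;\ZZ/2)$.

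\textbf{Step 2: propagate to higher degrees.} The covering $p \colon S^n \to \RR P^n$ gives the short exact sequence of chain complexes with $\ZZ/2$ coefficients
\[ 0 \to C_*(\RR P^n) \xrightarrow{\tau} C_*(S^n) \xrightarrow{p_*} C_*(\RR P^n) \to 0, \]
where $\tau$ is the transfer. This sequence is natural with respect to $f$ and $\bar f$, because $f$ commutes with the deck transformation. In the resulting long exact sequence, $H_i(S^n;\ZZ/2) = 0$ for $0 < i < n$, so the connecting homomorphisms $H_i(\RR P^n) \to H_{i-1}(\RR P^n)$ are isomorphisms for $0 < i < n$. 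Naturality of these connecting maps, together with Step~1, then shows by induction on $i$ that $\bar f_*$ is an isomorphism on $H_i(\RR P^n;\ZZ/2)$ for all $i \le n-1$.

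\textbf{Step 3: the top degree.} The top segment of the long exact sequence is
\[ 0 \to H_n(\RR P^n) \to H_n(S^n) \to H_n(\RR P^n) \to H_{n-1}(\RR P^n) \to 0, \]
with all groups $\ZZ/2$. Exactness forces the last map to be an isomorphism, and hence the map $H_n(\RR P^n) \to H_n(S^n)$ in degree $n$ to be an isomorphism as well. Naturality applied to these two isomorphisms gives that $\bar f_*$ is an isomorphism on $H_n(\RR P^n)$, and then that $f_*$ is an isomorphism on $H_n(S^n;\ZZ/2)$. It follows that $\deg f$ is odd, which implies $f_* \ne 0$ on $H_n(S^n;\ZZ)$.

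The main obstacle is the naturality bookkeeping in the top degree of Step~3, namely identifying which maps in the exact sequence are isomorphisms. I would handle it exactly as in Hatcher's Proposition~2B.6, which the paper cites.
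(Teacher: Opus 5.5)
Your proposal is correct and is essentially the transfer-sequence argument of Hatcher's Proposition~2B.6, which the paper cites instead of giving its own proof; getting the $H_1$ case by path lifting is a harmless variant of starting the induction at $H_0$ (the connecting map $H_1(\RR P^n;\ZZ/2)\to H_0(\RR P^n;\ZZ/2)$ is also an isomorphism, because $\tau_*=0$ on $H_0$). One small bookkeeping slip: for $n=1$, the only case the paper uses, your top segment does not end in $0$ since $H_0(S^1;\ZZ/2)\neq 0$, but Step~1 already shows $\bar f_*$ is an isomorphism on $H_1(\RR P^1;\ZZ/2)$, and $\tau_*$ in degree $n$ is always injective (as $H_{n+1}(\RR P^n;\ZZ/2)=0$), hence bijective, so the conclusion stands.
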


\begin{proof}[Proof of Theorem~\ref{theorem neighborhood}]
Suppose that $H_1(\NNN(G); \ZZ)$ is a torsion group and, for a contradiction, that $\chi(G) = 3$. Since $G$ is connected and non-bipartite, $\NNN(G)$ is connected. Its homotopy equivalence with $\BBB(G)$ implies that $\BBB(G)$ is connected as well.

There is therefore a $\ZZ_2$-map $f$ from $S^1$ to $\BBB(G)$. Indeed, let $\tau$ denote the involution on $\BBB(G)$, choose a point $a \in \BBB(G)$, and take a path $\gamma$ from $a$ to $\tau(a)$. Joining $\gamma$ to $\tau \circ \gamma$ gives the required equivariant map from $S^1$ to $\BBB(G)$.

By Theorem~\ref{theorem index}, we have $\ind_{\ZZ_2}(\BBB(G)) \le 1$, and hence there is a $\ZZ_2$-map $g \colon \BBB(G) \to S^1$. The composite $g \circ f \colon S^1 \to S^1$ is equivariant. Since $(g \circ f)_* = g_* \circ f_*$, Theorem~\ref{theorem Borsuk} implies that $g_* \colon H_1(\BBB(G); \ZZ) \to H_1(S^1; \ZZ)$ is nontrivial. On the other hand, $H_1(\BBB(G); \ZZ) \cong H_1(\NNN(G); \ZZ)$ is a torsion group, and there is no nontrivial homomorphism from a torsion group to $H_1(S^1; \ZZ) \cong \ZZ$. This is a contradiction.
\end{proof}

\section*{Acknowledgments}
The first and second authors were supported in part by JSPS KAKENHI Grant Numbers JP23K13006 and JP23K12975, respectively. The authors are grateful to Kolja Knauer for bringing his related work to their attention.

\section*{Data availability}
No datasets were generated or analyzed in this study.

\section*{Declaration of generative AI and AI-assisted technologies in the manuscript preparation process}
The authors used ChatGPT and Gemini to improve the language, organization, and readability of the manuscript and to assist in reviewing the relevant literature. We also used ChatGPT 6.0 Pro Astra to check for mathematical consistency. All mathematical results and proofs in this paper were developed by the authors independently of generative AI. The authors reviewed and edited the resulting text and retain full responsibility for the content of the manuscript.


\begin{thebibliography}{99}

\bibitem{Alon}
N.~Alon, The chromatic number of random Cayley graphs, European J. Combin. \textbf{34} (2013), 1232--1243.

\bibitem{AO}
N.~Alon and A.~Orlitsky, Repeated communication and Ramsey graphs, IEEE Trans. Inform. Theory \textbf{41} (1995), 1276--1289.

\bibitem{Babai}
L.~Babai, Chromatic number and subgraphs of Cayley graphs, in \emph{Theory and Applications of Graphs} (Y.~Alavi and D.~R.~Lick, eds.), Lecture Notes in Mathematics, vol.~642, Springer-Verlag, Berlin, 1978, pp.~10--22.

\bibitem{BNT}
L.~Beaudou, R.~Naserasr, and C.~Tardif, Homomorphisms of binary Cayley graphs, Discrete Math. \textbf{338} (2015), 2539--2544.

\bibitem{Czerwinski}
S.~Czerwi\'nski, Random runners are very lonely, J. Combin. Theory Ser. A \textbf{119} (2012), 1194--1199.

\bibitem{CK}
J.~Cervantes and M.~Krebs, Chromatic numbers of Cayley graphs of abelian groups: A matrix method, Linear Algebra Appl. \textbf{676} (2023), 277--295.

\bibitem{DM} H.R.~Daneshpajouh, F.~meunier, Box complexes: At the crossroad of graph theory and topology, Discrete Math. {\bf 348} (2025) 114422.

\bibitem{DHY}
J. Davies, M. Hatzel, and L. Yepremyan, Minimal Cayley graphs with large chromatic number, arXiv:2608.06254

\bibitem{EM}
K.~Enami and T.~Matsushita, Higher-dimensional generalization of Youngs' theorem and circular colorings, J. Combin. Theory Ser. B \textbf{180} (2026), 62--83.

\bibitem{GK}
I.~Garc\'ia-Marco and K.~Knauer, Coloring minimal Cayley graphs, European J. Combin. \textbf{125} (2025), 104108.

\bibitem{GKM}
I. García-Marco, K. Knauer, and G. Menara, Coloring semiminimal Cayley Graphs, arXiv:2607.26942

\bibitem{GR}
C.~Godsil and G.~Royle, \emph{Algebraic Graph Theory}, Graduate Texts in Mathematics, vol.~207, Springer-Verlag, New York, 2001.

\bibitem{GR2}
C. Godsil and B. Rooney, Hardness of computing clique number and chromatic number for Cayley graphs, European Journal of Combinatorics \textbf{62} (2017), 147–166.

\bibitem{Hatcher}
A.~Hatcher, \emph{Algebraic Topology}, Cambridge University Press, Cambridge, 2002.

\bibitem{HN}
P.~Hell and J.~Ne\v{s}et\v{r}il, \emph{Graphs and Homomorphisms}, Oxford Lecture Series in Mathematics and its Applications, vol.~28, Oxford University Press, Oxford, 2004.

\bibitem{Lovasz}
L.~Lov\'asz, Kneser's conjecture, chromatic number, and homotopy, J. Combin. Theory Ser. A \textbf{25} (1978), 319--324.

\bibitem{Kozlov}
D.~N.~Kozlov, \emph{Combinatorial Algebraic Topology}, Algorithms and Computation in Mathematics, vol.~21, Springer, Berlin, 2008.

\bibitem{KS}
M.~Krebs and M.~Sankar, Abelian groups without $3$-chromatic Cayley graphs, J. Combin. Theory Ser. B \textbf{179} (2026), 205--218 \url{https://doi.org/10.1016/j.jctb.2026.03.004}.

\bibitem{Matousek}
J.~Matou\v{s}ek, \emph{Using the Borsuk--Ulam Theorem: Lectures on Topological Methods in Combinatorics and Geometry}, Universitext, Springer-Verlag, Berlin, 2003.

\bibitem{MZ}
J. Matoušek and G. M. Ziegler, Topological lower bounds for the chromatic number: A hierarchy, Jahresbericht der DMV 106 (2004), 71–90.

\bibitem{MatsushitaJMSUT}
T.~Matsushita, Fundamental groups of neighborhood complexes, J. Math. Sci. Univ. Tokyo \textbf{24} (2017), no.~3, 321--353.

\bibitem{MatsushitaJCTB}
T.~Matsushita, Hom complexes of graphs whose codomains are square-free, J. Combin. Theory Ser. B \textbf{178} (2026), 267--293.

\bibitem{MW} T.~Matsushita, S.~Wakatsuki, Dominance complexes, neighborhood complexes and combinatorial Alexander duals, J. Combin. Theory Ser. A {\bf 211} (2025) 105978.

\bibitem{Nadolski}
A.~Nadolski, Circular coloring of graphs, in \emph{Graph Colorings} (M.~Kubale, ed.), Contemporary Mathematics, vol.~352, American Mathematical Society, Providence, RI, 2004, pp.~123--137.

\bibitem{Payan}
C.~Payan, On the chromatic number of cube-like graphs, Discrete Math. \textbf{103} (1992), 271--277.

\bibitem{Vince}
A.~Vince, Star chromatic number, J. Graph Theory \textbf{12} (1988), no.~4, 551--559.

\bibitem{WrochnaJCTB}
M.~Wrochna, Square-free graphs are multiplicative, J. Combin. Theory Ser. B \textbf{122} (2017), 479--507.

\bibitem{ZhuSurvey}
X.~Zhu, Circular chromatic number: A survey, Discrete Math. \textbf{229} (2001), 371--410.

\bibitem{ZhuRecent}
X.~Zhu, Recent developments in circular colouring of graphs, in \emph{Topics in Discrete Mathematics} (M.~Klazar, J.~Kratochv\'{i}l, M.~Loebl, J.~Matou\v{s}ek, R.~Thomas, and P.~Valtr, eds.), Algorithms and Combinatorics, vol.~26, Springer, Berlin, 2006, pp.~497--550.

\end{thebibliography}
\end{document}